\documentclass{article}

\usepackage{amsmath}
\usepackage{amsthm}
\usepackage{alltt}
\usepackage{hyperref}

\setlength{\oddsidemargin}{1.5cm}
\setlength{\evensidemargin}{1.5cm}
\setlength{\textwidth}{13.7cm}

\newtheorem{theorem}{Theorem}[section]

\numberwithin{equation}{section}

\newcommand{\res}[1]
{\textrm{\rm Res}_{#1}}

\newcommand{\multsum}
{ \mathop{\sum\cdots\sum}_{
   \genfrac{}{}{0pt}{}{\scriptstyle k_1=0\ \cdots\ k_m=0}
     {\scriptstyle \sum_{i=1}^m k_i = n}
   }^{a_1\ \ \cdots\ \ a_m}
}
\newcommand{\multsumder}
{ \mathop{\sum\cdots\sum}_{
   \scriptstyle k_1=0\cdots k_{m-1}=0
   }^{a_1\ \cdots\ a_{m-1}}
}
\newcommand{\multsuminf}
{ \mathop{\sum\cdots\sum}_{
   \scriptstyle k_1=0\cdots k_{m-1}=0
   }^{\infty\ \ \cdots\ \ \infty}
}
\newcommand{\multsumij}
{ \mathop{\sum\cdots\sum}_{
   \scriptstyle i_1=0\ \cdots\ i_s=0}^{m\ \ \cdots\ \ m}
}
\newcommand{\multsumfree}
{ \mathop{\sum\cdots\sum}_{
   \scriptstyle k_1=0\ \cdots\ k_m=0 
    }^{a_1\ \ \cdots\ \ a_m}
}
\newcommand{\dblsum}[2]
{ \mathop{\sum\sum}_{
   \genfrac{}{}{0pt}{}{\scriptstyle #1=1\ #2=1}
     {\scriptstyle #1\neq #2}}^{m\ \ \ m}
}
\newcommand{\tripsum}[3]
{ \mathop{\sum\sum\sum}_{
   \genfrac{}{}{0pt}{}{\scriptstyle #1=1\ #2=1\ #3=1}
     {\scriptstyle #1\neq #2\neq #3,\ #1\neq #3}}^{m\ \ \ m\ \ \ m}
}
\newcommand{\mulprod}[3]
{ \prod_{\genfrac{}{}{0pt}{}{\scriptstyle #1}{\scriptstyle #2}}^{#3}
}
\newcommand{\re}
{\textrm{\rm Re}}
\newcommand{\im}
{\textrm{\rm Im}}
\newcommand{\Aabs}
{A_{\textrm{\rm abs}}}
\newcommand{\Cabs}
{C_{\textrm{\rm abs}}}

\title{Some Generalized Multi-sum Chu-Vandermonde Identities}
\author{M.J. Kronenburg}
\date{}

\begin{document}

\maketitle

\begin{abstract}
Some generalized multi-sum Chu-Vandermonde identities are presented and proved,
generalizing some known multi-sum Chu-Vandermonde identities from literature
and adding some quadratic and cubic examples of these identities.
Some other closely related identities are provided.
The identities are proved with the integral representation method
using complex residues, a method also used in some earlier papers.
\end{abstract}

\noindent
\textbf{Keywords}: binomial coefficient, combinatorial identities.\\
\textbf{MSC 2010}: 05A10, 05A19

\section{Generalized Multi-sum Chu-Vandermonde\\ Identities}

Let $(a_i)_{i=1}^m$ and $(c_i)_{i=1}^m$ be two sequences each of $m$ nonnegative integers,
and let $(x_i)_{i=1}^m$ and $(y_i)_{i=1}^m$ be two sequences each of $m$ complex numbers,
and let $\overline{w}$ be the complex conjugate of $w$.
Let the following definitions be given.
\begin{equation}
 A_{p,q} = \sum_{i=1}^m x_i^p a_i^q
\end{equation}
\begin{equation}
 A_p = A_{p,1} = \sum_{i=1}^m x_i^p a_i
\end{equation}
\begin{equation}
 A_{p,q}^* = \sum_{i=1}^m x_i^p y_i^q a_i
\end{equation}
\begin{equation}
 A_p^* = A_{0,p}^* = \sum_{i=1}^m y_i^p a_i
\end{equation}
\begin{equation}
 \Aabs = \sum_{i=1}^m |x_i|^2 a_i
\end{equation}
\begin{equation}
 C_{p,q} = \sum_{i=1}^m x_i^p c_i^q
\end{equation}
\begin{equation}
 C_p = C_{p,1} = \sum_{i=1}^m x_i^p c_i
\end{equation}
\begin{equation}
 C_{p,q}^* = \sum_{i=1}^m x_i^p y_i^q c_i
\end{equation}
\begin{equation}
 C_p^* = C_{0,p}^* = \sum_{i=1}^m y_i^p c_i
\end{equation}
\begin{equation}
 \Cabs = \sum_{i=1}^m |x_i|^2 c_i
\end{equation}
\begin{equation}
 S_{p,q} = \sum_{i=1}^m x_i a_i^p c_i^q
\end{equation}
The following generalized multi-sum Chu-Vandermonde identities are proved.\\
For integer $m\geq 1$ and $n\geq 0$:
\begin{equation}\label{res1}
 \multsum\prod_{i=1}^m \binom{a_i}{k_i}\binom{k_i}{c_i} 
  = \binom{A_0-C_0}{n-C_0} \prod_{i=1}^m \binom{a_i}{c_i}
\end{equation}
\begin{equation}\label{res2}
\begin{split}
 & \multsum \left[ \prod_{i=1}^m \binom{a_i}{k_i}\binom{k_i}{c_i} \right] \sum_{i=1}^m x_i k_i \\
 & =  \binom{A_0-C_0}{n-C_0} \left[ \prod_{i=1}^m \binom{a_i}{c_i} \right]
  \frac{(n-C_0)A_1+(A_0-n)C_1}{A_0-C_0} \\
\end{split}
\end{equation}
\begin{equation}\label{res3}
\begin{split}
 & \multsum \left[ \prod_{i=1}^m \binom{a_i}{k_i}\binom{k_i}{c_i} \right] (\sum_{i=1}^m x_i k_i)^2
  =  \binom{A_0-C_0}{n-C_0} \left[ \prod_{i=1}^m \binom{a_i}{c_i} \right] \\
 & \cdot \frac{(n-C_0)[(n-C_0-1)A_1^2+(A_0-n)(A_2-C_2+2A_1C_1)]+(A_0-n)(A_0-n-1)C_1^2}{(A_0-C_0)(A_0-C_0-1)} \\
\end{split}
\end{equation}
\begin{equation}\label{res4}
\begin{split}
 & \multsum \left[ \prod_{i=1}^m \binom{a_i}{k_i}\binom{k_i}{c_i} \right] |\sum_{i=1}^m x_i k_i|^2 \\
 & =  \binom{A_0-C_0}{n-C_0} \left[ \prod_{i=1}^m \binom{a_i}{c_i} \right] \frac{1}{(A_0-C_0)(A_0-C_0-1)} \\
 & \qquad\cdot \{ (n-C_0)[(n-C_0-1)|A_1|^2+(A_0-n)(\Aabs-\Cabs+A_1\overline{C_1}+\overline{A_1}C_1)] \\
 & \qquad\quad +(A_0-n)(A_0-n-1)|C_1|^2 \} \\
\end{split}
\end{equation}
\begin{equation}\label{res5}
\begin{split}
 & \multsum \left[ \prod_{i=1}^m \binom{a_i}{k_i}\binom{k_i}{c_i} \right] (\sum_{i=1}^m x_i k_i)(\sum_{i=1}^m y_i k_i) \\
 & =  \binom{A_0-C_0}{n-C_0} \left[ \prod_{i=1}^m \binom{a_i}{c_i} \right] \frac{1}{(A_0-C_0)(A_0-C_0-1)} \\
 & \quad\cdot \{ (n-C_0)[(n-C_0-1)A_1A_1^*+(A_0-n)(A_{1,1}^*-C_{1,1}^*+A_1C_1^*+A_1^*C_1)] \\
 & \quad\quad +(A_0-n)(A_0-n-1)C_1C_1^* \} \\
\end{split}
\end{equation}
\begin{equation}\label{res6}
\begin{split}
 & \multsum \left[ \prod_{i=1}^m \binom{a_i}{k_i}\binom{k_i}{c_i} \right] \sum_{i=1}^m x_i k_i^2 
  =  \binom{A_0-C_0}{n-C_0} \left[ \prod_{i=1}^m \binom{a_i}{c_i} \right] \\
 & \cdot \frac{ (n-C_0)[(n-C_0-1)A_{1,2}+(A_0-n)(A_1-C_1+2S_{1,1})] +(A_0-n)(A_0-n-1)C_{1,2} }{(A_0-C_0)(A_0-C_0-1)} \\
\end{split}
\end{equation}
\begin{equation}\label{res7}
\begin{split}
 & \multsum \left[ \prod_{i=1}^m \binom{a_i}{k_i}\binom{k_i}{c_i} \right] (\sum_{i=1}^m x_i k_i)^3 \\
 & =  \binom{A_0-C_0}{n-C_0} \left[ \prod_{i=1}^m \binom{a_i}{c_i} \right] \frac{1}{(A_0-C_0)(A_0-C_0-1)(A_0-C_0-2)} \\
 & \qquad\cdot \{ (n-C_0)(n-C_0-1) [ (n-C_0-2)A_1^3 \\ 
 & \qquad\qquad\qquad +(A_0-n)(C_3-A_3+3A_1(A_2-C_2+A_1C_1))] \\
 & \qquad\quad +(A_0-n)(A_0-n-1) [ (A_0-n-2)C_1^3 \\
 & \qquad\qquad\qquad + (n-C_0)(A_3-C_3+3C_1(A_2-C_2+A_1C_1))] \} \\
\end{split}
\end{equation}
\begin{equation}\label{res8}
\begin{split}
 & \multsum \left[ \prod_{i=1}^m \binom{a_i}{k_i}\binom{k_i}{c_i} \right] \sum_{i=1}^m x_i k_i^3 \\
 & =  \binom{A_0-C_0}{n-C_0} \left[ \prod_{i=1}^m \binom{a_i}{c_i} \right] \frac{1}{(A_0-C_0)(A_0-C_0-1)(A_0-C_0-2)} \\
 & \qquad\cdot \{ (n-C_0)(n-C_0-1) [ (n-C_0-2)A_{1,3} \\ 
 & \qquad\qquad\qquad +(A_0-n)(C_1-A_1+3(S_{2,1}+A_{1,2}-S_{1,1}))] \\
 & \qquad\quad +(A_0-n)(A_0-n-1) [ (A_0-n-2)C_{1,3} \\
 & \qquad\qquad\qquad + (n-C_0)(A_1-C_1+3(S_{1,2}-C_{1,2}+S_{1,1}))] \} \\
\end{split}
\end{equation}
Taking all $c_i=0$ and therefore all $C_{p,q}=C_p=C_{p,q}^*=C_p^*=\Cabs=0$, identities (\ref{res1}), (\ref{res2}) and (\ref{res4})
can be found in \cite{M18},
and taking $m=2$, $x_1=1$ and $x_2=0$, and therefore $A_0=a+b$, $C_0=c+d$, $A_1=A_2=A_3=a$ and $C_1=C_2=C_3=c$,
identities (\ref{res1}), (\ref{res2}), (\ref{res3}) and (\ref{res7})
can be found in \cite{K18}.\\

\section{Unrestricted Multi-sum Identities}

When the multiple sum is without the restriction on the $k_i$-indices, other closely related identities result which are given below.\\
For integer $m\geq 1$:
\begin{equation}
 \multsumfree \prod_{i=1}^m \binom{a_i}{k_i}\binom{k_i}{c_i} = 2^{A_0-C_0} \prod_{i=1}^m \binom{a_i}{c_i}
\end{equation}
\begin{equation}
 \multsumfree \left[ \prod_{i=1}^m \binom{a_i}{k_i}\binom{k_i}{c_i} \right] \sum_{i=1}^m x_i k_i
  = 2^{A_0-C_0-1} \left[ \prod_{i=1}^m \binom{a_i}{c_i} \right] (A_1+C_1)
\end{equation}
\begin{equation}
\begin{split}
 & \multsumfree \left[ \prod_{i=1}^m \binom{a_i}{k_i}\binom{k_i}{c_i} \right] (\sum_{i=1}^m x_i k_i)^2 \\
 & = 2^{A_0-C_0-2} \left[ \prod_{i=1}^m \binom{a_i}{c_i} \right] [ A_2-C_2 + (A_1+C_1)^2 ] \\
\end{split}
\end{equation}
\begin{equation}
\begin{split}
 & \multsumfree \left[ \prod_{i=1}^m \binom{a_i}{k_i}\binom{k_i}{c_i} \right] |\sum_{i=1}^m x_i k_i|^2 \\
 & = 2^{A_0-C_0-2} \left[ \prod_{i=1}^m \binom{a_i}{c_i} \right] ( \Aabs-\Cabs + |A_1+C_1|^2 ) \\
\end{split}
\end{equation}
\begin{equation}
\begin{split}
 & \multsumfree \left[ \prod_{i=1}^m \binom{a_i}{k_i}\binom{k_i}{c_i} \right] (\sum_{i=1}^m x_i k_i)(\sum_{i=1}^m y_i k_i) \\
 & = 2^{A_0-C_0-2} \left[ \prod_{i=1}^m \binom{a_i}{c_i} \right] [ A_{1,1}^*-C_{1,1}^* + (A_1+C_1)(A_1^*+C_1^*) ] \\
\end{split}
\end{equation}
\begin{equation}
\begin{split}
 & \multsumfree \left[ \prod_{i=1}^m \binom{a_i}{k_i}\binom{k_i}{c_i} \right] \sum_{i=1}^m x_i k_i^2 \\
 & = 2^{A_0-C_0-2} \left[ \prod_{i=1}^m \binom{a_i}{c_i} \right] ( A_1-C_1+A_{1,2}+C_{1,2}+2S_{1,1} ) \\
\end{split}
\end{equation}
\begin{equation}
\begin{split}
 & \multsumfree \left[ \prod_{i=1}^m \binom{a_i}{k_i}\binom{k_i}{c_i} \right] (\sum_{i=1}^m x_i k_i)^3 \\
 & = 2^{A_0-C_0-3} \left[ \prod_{i=1}^m \binom{a_i}{c_i} \right] (A_1+C_1)[(A_1+C_1)^2 + 3(A_2-C_2)] \\
\end{split}
\end{equation}
\begin{equation}
\begin{split}
 & \multsumfree \left[ \prod_{i=1}^m \binom{a_i}{k_i}\binom{k_i}{c_i} \right] \sum_{i=1}^m x_i k_i^3 \\
 & = 2^{A_0-C_0-3} \left[ \prod_{i=1}^m \binom{a_i}{c_i} \right] [ A_{1,3}+C_{1,3}+3(A_{1,2}-C_{1,2}+S_{1,2}+S_{2,1}) ] \\
\end{split}
\end{equation}

\section{Proof of the Multi-sum Chu-Vandermonde\\ Identities}

For the proof of the multi-sum identities, the order of summation is changed:
\begin{equation}\label{multsumorder}
\begin{split}
 & \multsum\left[ \prod_{i=1}^m \binom{a_i}{k_i}\binom{k_i}{c_i} \right]\multsumij\prod_{j=1}^s w_{i_j} k_{i_j}^{p_j} \\
 & = \multsumij \left[\prod_{j=1}^s w_{i_j}\right] \multsum\left[ \prod_{i=1}^m \binom{a_i}{k_i}\binom{k_i}{c_i} \right] \prod_{j=1}^s k_{i_j}^{p_j} \\
\end{split}
\end{equation}
The powers $p_j$ in the right side must be constant in the second multiple summation over the $k_i$, 
which means that the indices $i_j$ must be mutually unequal.
A product of sums in the summand is therefore splitted into multiple
sums over mutually unequal indices:
\begin{equation}
 (\sum_{i=1}^m x_i k_i)(\sum_{i=1}^m y_i k_i) = 
  \dblsum{p}{q} x_p y_q k_p k_q 
  + \sum_{p=1}^m x_p y_p k_p^2
\end{equation}
\begin{equation}\label{tripexpand}
\begin{split}
 & (\sum_{i=1}^m x_i k_i)(\sum_{i=1}^m y_i k_i)(\sum_{i=1}^m z_i k_i) 
 = \tripsum{p}{q}{r} x_py_qz_r k_pk_qk_r \\
 & + \dblsum{p}{q} ( x_py_qz_q + y_px_qz_q + z_px_qy_q ) k_p k_q^2 + \sum_{p=1}^m x_p y_p z_p k_p^3 \\
\end{split}
\end{equation}
For the multi-sum identities up to cubic order, in the right side of (\ref{multsumorder}) the
power combinations $1$, $k_p$, $k_pk_q$, $k_p^2$, $k_pk_qk_r$, $k_pk_q^2$ and $k_p^3$ are needed,
where as mentioned $p\neq q\neq r$ and $p\neq r$,
and the second multiple summation over the $k_i$ is evaluated below
with the integral representation method using complex residues as in earlier papers \cite{K18,K19}:
\begin{equation}\label{binomdef}
 \binom{n}{k} = \res{w} \frac{(1+w)^n}{w^{k+1}}
\end{equation}
When $f(w)$ does not have a pole at $w=w_p$, then 
the following is proved in section \ref{ressect}:
\begin{equation}\label{resdef}
 \res{w=w_p} \frac{f(w)}{(w-w_p)^k} = \frac{1}{(k-1)!} D_w^{k-1} f(w)|_{w=w_p}
\end{equation}
where $D_w^n f(w)|_{w=w_p}$ is the $n$-th derivative of $f(w)$ at $w=w_p$.
\begin{theorem}\label{theorem0}
For integer $m\geq 1$:
\begin{equation}
 \multsum\prod_{i=1}^m \binom{a_i}{k_i}\binom{k_i}{c_i} 
  = \binom{A_0-C_0}{n-C_0} \prod_{i=1}^m \binom{a_i}{c_i}
\end{equation}
\end{theorem}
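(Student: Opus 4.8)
The plan is to turn the constrained multi-sum into a single coefficient extraction, so that the product over $i$ decouples. Concretely, I would enforce the constraint $\sum_{i=1}^m k_i = n$ by inserting the residue $\res{z}\, z^{\sum_i k_i - n - 1}$, which equals $1$ when $\sum_i k_i = n$ and $0$ otherwise. Pulling $\res{z}\, z^{-n-1}$ outside and relaxing the $k_i$ to run freely (the factors $\binom{a_i}{k_i}$ and $\binom{k_i}{c_i}$ vanish outside $c_i\le k_i\le a_i$ anyway), the summand factors and the left-hand side becomes
\begin{equation*}
 \res{z}\, z^{-n-1} \prod_{i=1}^m g_i(z), \qquad g_i(z) = \sum_{k_i} \binom{a_i}{k_i}\binom{k_i}{c_i} z^{k_i}.
\end{equation*}

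Next I would evaluate the inner single-variable sum $g_i(z)$ using the residue representation (\ref{binomdef}) for the inner binomial, writing $\binom{k_i}{c_i} = \res{w}\frac{(1+w)^{k_i}}{w^{c_i+1}}$. Interchanging the finite $k_i$-sum with the residue gives $g_i(z) = \res{w}\, w^{-c_i-1} \sum_{k_i}\binom{a_i}{k_i}(z(1+w))^{k_i} = \res{w} \frac{(1+z+zw)^{a_i}}{w^{c_i+1}}$, where I summed the binomial series by the binomial theorem. Extracting the coefficient of $w^{c_i}$ from $(1+z+zw)^{a_i}=((1+z)+zw)^{a_i}$ then yields the closed form $g_i(z) = \binom{a_i}{c_i}\, z^{c_i}(1+z)^{a_i-c_i}$. (Equivalently this is the subset-of-a-subset identity $\binom{a_i}{k_i}\binom{k_i}{c_i}=\binom{a_i}{c_i}\binom{a_i-c_i}{k_i-c_i}$ followed by the binomial theorem.)

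Finally I would multiply the factors, using the definitions $C_0=\sum_i c_i$ and $A_0=\sum_i a_i$ to collect exponents: $\prod_{i=1}^m g_i(z) = \left[\prod_{i=1}^m \binom{a_i}{c_i}\right] z^{C_0}(1+z)^{A_0-C_0}$. Substituting back, the factor $\prod_i \binom{a_i}{c_i}$ comes out front and the residue reduces to $\res{z}\frac{(1+z)^{A_0-C_0}}{z^{n-C_0+1}}$, which by (\ref{binomdef}) is exactly $\binom{A_0-C_0}{n-C_0}$, establishing the claimed identity.

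I do not expect a genuine obstacle here: once the constraint is turned into a coefficient extraction, the argument is essentially bookkeeping. The only points requiring care are the justification for interchanging summation and residue (legitimate since every $k_i$-sum is finite, truncated by $\binom{a_i}{k_i}$) and keeping the exponents $C_0$ and $A_0-C_0$ straight when forming the product. This base case is also the template for the weighted identities (\ref{res2})–(\ref{res8}): there the same single-variable reduction applies, but the extra polynomial weights in the $k_i$ (expanded into the power combinations via (\ref{multsumorder}) and handled with the derivative formula (\ref{resdef})) will make the residue extractions substantially more involved, and that is where the real computational difficulty will lie.
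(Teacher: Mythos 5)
Your proof is correct, but it takes a genuinely different route from the paper's. You encode the constraint $\sum_i k_i=n$ by inserting an auxiliary residue $\res{z}\,z^{\sum_i k_i-n-1}$, which decouples the multiple sum into a product of independent single-variable generating functions $g_i(z)=\binom{a_i}{c_i}z^{c_i}(1+z)^{a_i-c_i}$, each evaluated in closed form by trinomial revision plus the binomial theorem; only one residue extraction (in $z$) remains at the end. The paper instead eliminates $k_m=n-\sum_{i<m}k_i$, represents \emph{every} binomial by its own residue variable $w_i$, sums the resulting geometric series in $w_m/w_i$, and then collapses the $m-1$ simple poles at $w_i=w_m$ one by one until a single residue in $w_m$ survives. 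Your version is the cleaner and more standard generating-function argument for the base case --- the left side is just $[z^n]\prod_i g_i(z)$ --- whereas the paper's heavier multi-variable scaffolding is chosen because it is the template reused verbatim in Theorems \ref{theorem1}--\ref{theorem111}: the polynomial weights $k_p^j$ are absorbed by replacing the geometric series with its derivatives, which raises the order of the pole at $w_p=w_m$ and feeds directly into (\ref{resdef}). Your setup also extends to the weighted cases (a factor $k_p$ becomes the Euler operator $z\,d/dz$ acting on $g_p(z)$ alone), so nothing essential is lost; you have simply traded the paper's coupled-residue bookkeeping for single-variable differentiation. Your remarks on interchanging finite sums with residues and on where the later difficulty lies are accurate.
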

\begin{proof}
The trinomial revision identity \cite{GKP94,K15} is applied:
\begin{equation}\label{trirev}
 \binom{a_i}{k_i}\binom{k_i}{c_i} = \binom{a_i-c_i}{k_i-c_i}\binom{a_i}{c_i}
\end{equation}
Because this expression is zero when $c_i>a_i$, 
in the multiple summation it can be assumed that $0\leq c_i\leq a_i$
and that $c_i\leq k_i\leq a_i$.
The restriction on the $k_i$-indices in the multiple sum can therefore be realized by
setting $k_m$ equal to:
\begin{equation}\label{ksumdef}
 k_m = n - \sum_{i=1}^{m-1} k_i
\end{equation}
reducing an $m$-fold sum to an $(m-1)$-fold sum.
The infinite geometric series \cite{K19} is used:
\begin{equation}\label{infgeom0}
 \sum_{k=0}^{\infty} w^k = \frac{1}{1-w}
\end{equation}
The upper bounds $a_i$ can be replaced by $\infty$ because
the summand is zero when any $k_i>a_i$.
\begin{equation}
\begin{split}
 & \multsum\prod_{i=1}^m \binom{a_i}{k_i}\binom{k_i}{c_i} \\
 & = \left[ \prod_{i=1}^m \binom{a_i}{c_i} \right] \multsumder
 \binom{a_m-c_m}{n-c_m-\sum_{i=1}^{m-1}k_i} \prod_{i=1}^{m-1} \binom{a_i-c_i}{k_i-c_i} \\
 & = \left[ \prod_{i=1}^m \binom{a_i}{c_i} \right] \multsuminf
 \left[\prod_{i=1}^m \res{w_i}\right] \frac{(1+w_m)^{a_m-c_m}}{w_m^{n-c_m-\sum_{i=1}^{m-1}k_i+1}}
   \prod_{i=1}^{m-1} \frac{(1+w_i)^{a_i-c_i}}{w_i^{k_i-c_i+1}} \\
 & = \left[ \prod_{i=1}^m \binom{a_i}{c_i}\res{w_i} \right] \frac{1}{w_m^n}
   \left[\prod_{i=1}^{m}(1+w_i)^{a_i-c_i}w_i^{c_i-1}\right] \multsuminf \prod_{i=1}^{m-1}\left(\frac{w_m}{w_i}\right)^{k_i} \\
 & = \left[ \prod_{i=1}^m \binom{a_i}{c_i}\res{w_i} \right] \frac{1}{w_m^n}
   \left[\prod_{i=1}^{m}(1+w_i)^{a_i-c_i}w_i^{c_i-1}\right] \prod_{i=1}^{m-1} \sum_{k_i=0}^{\infty} \left(\frac{w_m}{w_i}\right)^{k_i} \\
 & = \left[ \prod_{i=1}^m \binom{a_i}{c_i}\res{w_i} \right] \frac{1}{w_m^n}
    \left[\prod_{i=1}^{m}(1+w_i)^{a_i-c_i}w_i^{c_i-1}\right] \prod_{i=1}^{m-1} \frac{1}{1-\frac{\textstyle w_m}{\textstyle w_i}} \\
 & = \left[ \prod_{i=1}^m \binom{a_i}{c_i} \right] \res{w_m} \frac{(1+w_m)^{a_m-c_m}}{w_m^{n-c_m+1}}
     \prod_{i=1}^{m-1}\res{w_i}\frac{(1+w_i)^{a_i-c_i}w_i^{c_i}}{w_i-w_m} \\
\end{split}
\end{equation}
The residue of $w_i$ is evaluated with (\ref{resdef}), using that $c_i\geq 0$ and $a_i-c_i\geq 0$:
\begin{equation}
\begin{split}
 & = \left[ \prod_{i=1}^m \binom{a_i}{c_i} \right] \res{w_m} \frac{(1+w_m)^{a_m-c_m}}{w_m^{n-c_m+1}}
   \prod_{i=1}^{m-1} (1+w_m)^{a_i-c_i} w_m^{c_i} \\
 & = \left[ \prod_{i=1}^m \binom{a_i}{c_i} \right] \res{w_m} \frac{(1+w_m)^{\sum_{i=1}^m(a_i-c_i)}}{w_m^{n-\sum_{i=1}^mc_i+1}}
   = \left[ \prod_{i=1}^m \binom{a_i}{c_i} \right] \binom{\sum_{i=1}^m(a_i-c_i)}{n-\sum_{i=1}^mc_i}
\end{split}
\end{equation}
\end{proof}
\begin{theorem}\label{theorem1}
For integer $m\geq 1$ and $1\leq p\leq m$:
\begin{equation}
 \multsum \left[\prod_{i=1}^m \binom{a_i}{k_i}\binom{k_i}{c_i}\right] k_p 
  = \binom{A_0-C_0}{n-C_0} \left[\prod_{i=1}^m \binom{a_i}{c_i}\right] \frac{(n-C_0)a_p+(A_0-n)c_p}{A_0-C_0} 
\end{equation}
\end{theorem}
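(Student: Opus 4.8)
The plan is to follow the integral-representation argument used for Theorem~\ref{theorem0}, inserting the extra weight $k_p$ into the geometric summation. First I would observe that in resolving the restriction $\sum_i k_i=n$ via (\ref{ksumdef}) one is free to solve for any single index, and the multi-sum itself is unchanged by this choice. For $m\geq 2$ I would therefore solve for an index different from $p$ and relabel so that this eliminated index is $k_m$, making $p\in\{1,\dots,m-1\}$ a genuine summation variable. The case $m=1$ is immediate: the constraint forces $k_1=n$, and the claim reduces, via Theorem~\ref{theorem0}, to the scalar identity $n=\big[(n-c_1)a_1+(a_1-n)c_1\big]/(a_1-c_1)$, which holds after cancellation.

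With $p<m$ fixed, I would apply the trinomial revision (\ref{trirev}) and the residue representation (\ref{binomdef}) exactly as in Theorem~\ref{theorem0}, extending the upper limits to $\infty$ by (\ref{infgeom0}). The only change is that the free index $p$ now carries the weight $k_p$, so its geometric sum is replaced by the derivative series
\begin{equation*}
 \sum_{k_p=0}^{\infty} k_p \Big(\frac{w_m}{w_p}\Big)^{k_p} = \frac{w_m w_p}{(w_p-w_m)^2},
\end{equation*}
while every other free index is summed as before to $w_i/(w_i-w_m)$. After absorbing the powers $w_i^{c_i-1}$ coming from (\ref{binomdef}), the residue attached to index $p$ becomes a \emph{double} pole,
\begin{equation*}
 \res{w_p} \frac{w_m\,(1+w_p)^{a_p-c_p} w_p^{c_p}}{(w_p-w_m)^2}
  = w_m\, D_{w_p}\!\big[(1+w_p)^{a_p-c_p} w_p^{c_p}\big]\big|_{w_p=w_m},
\end{equation*}
evaluated by (\ref{resdef}) with $k=2$. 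This is the one genuinely new computation.

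Carrying out that derivative produces two terms, $(a_p-c_p)(1+w_m)^{a_p-c_p-1}w_m^{c_p+1}$ and $c_p(1+w_m)^{a_p-c_p}w_m^{c_p}$, while the remaining free indices $i\neq p$ and the index $m$ contribute exactly as in Theorem~\ref{theorem0}. Taking the final residue in $w_m$ then collapses each of the two terms to a single binomial coefficient, and I expect the total to be
\begin{equation*}
 \left[\prod_{i=1}^m\binom{a_i}{c_i}\right]\!\left[(a_p-c_p)\binom{A_0-C_0-1}{n-C_0-1}+c_p\binom{A_0-C_0}{n-C_0}\right].
\end{equation*}
The last step is purely algebraic: using $\binom{A_0-C_0-1}{n-C_0-1}=\frac{n-C_0}{A_0-C_0}\binom{A_0-C_0}{n-C_0}$ I would factor out $\binom{A_0-C_0}{n-C_0}$, combine the bracket over the common denominator $A_0-C_0$, and watch the $c_pC_0$ contributions cancel, leaving the numerator $(n-C_0)a_p+(A_0-n)c_p$ exactly as stated.

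The main obstacles I anticipate are, first, bookkeeping the double-pole residue correctly (the first-derivative term in (\ref{resdef}) is where an error in the exponents $c_p$ and $a_p-c_p$ is easiest to make), and second, the final cancellation that merges the two binomial contributions into a single closed form. The symmetry reduction letting me assume $p<m$ is routine but must be stated explicitly to cover the case $p=m$.
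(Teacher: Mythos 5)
Your proposal is correct and follows essentially the same route as the paper's own proof: trinomial revision plus the residue representation, the weighted geometric series $\sum_k k w^k = w/(1-w)^2$ producing a double pole at $w_p=w_m$, evaluation by the first derivative, and the absorption identity to combine $(a_p-c_p)\binom{A_0-C_0-1}{n-C_0-1}+c_p\binom{A_0-C_0}{n-C_0}$ into the stated closed form. The only (harmless) difference is that you treat $m=1$ by direct verification, whereas the paper disposes of the case $p=m$ purely by the symmetry of the left-hand side under relabeling of the $k_i$.
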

\begin{proof}
The following modified infinite geometric series \cite{K19} is used:
\begin{equation}
 \sum_{k=0}^{\infty} k w^k = \frac{w}{(1-w)^2}
\end{equation}
The theorem is first proved for $1\leq p<m$:
\begin{equation}
\begin{split}
 &  \left[ \prod_{i=1}^m \res{w_i} \right] \frac{1}{w_m^n}
   \left[\prod_{i=1}^{m}(1+w_i)^{a_i-c_i}w_i^{c_i-1}\right] \multsuminf k_p \prod_{i=1}^{m-1}\left(\frac{w_m}{w_i}\right)^{k_i} \\
 & = \left[ \prod_{i=1}^m \res{w_i} \right] \frac{1}{w_m^n}
    \left[\prod_{i=1}^{m}(1+w_i)^{a_i-c_i}w_i^{c_i-1}\right] \frac{\frac{\textstyle w_m}{\textstyle w_p}}{(1-\frac{\textstyle w_m}{\textstyle w_p})^2}
    \mulprod{i=1}{i\neq p}{m-1} \frac{1}{1-\frac{\textstyle w_m}{\textstyle w_i}} \\
 & = \res{w_m} \frac{(1+w_m)^{a_m-c_m}}{w_m^{n-c_m}} \res{w_p} \frac{(1+w_p)^{a_p-c_p}w_p^{c_p}}{(w_p-w_m)^2}
     \mulprod{i=1}{i\neq p}{m-1}\res{w_i}\frac{(1+w_i)^{a_i-c_i}w_i^{c_i}}{w_i-w_m} \\
 & = \res{w_m} \frac{(1+w_m)^{a_m-c_m}}{w_m^{n-c_m}} \res{w_p} \frac{(1+w_p)^{a_p-c_p}w_p^{c_p}}{(w_p-w_m)^2}
     \mulprod{i=1}{i\neq p}{m-1} (1+w_m)^{a_i-c_i} w_m^{c_i} \\
\end{split}
\end{equation}
The residue of $w_p$ is evaluated with (\ref{resdef}) using the product
rule for the derivative:
\begin{equation}\label{resquad}
 \res{w_p} \frac{(1+w_p)^{a_p-c_p}w_p^{c_p}}{(w_p-w_m)^2} =
 (a_p-c_p)(1+w_m)^{a_p-c_p-1}w_m^{c_p} + c_p(1+w_m)^{a_p-c_p}w_m^{c_p-1}
\end{equation}
and evaluating the residue of $w_m$ and using the absorption identity \cite{GKP94,K15}:
\begin{equation}
\begin{split}
 & \res{w_m} [ (a_p-c_p) \frac{(1+w_m)^{\sum_{i=1}^m(a_i-c_i)-1}}{w_m^{n-\sum_{i=1}^mc_i}} 
  + c_p \frac{(1+w_m)^{\sum_{i=1}^m(a_i-c_i)}}{w_m^{n-\sum_{i=1}^mc_i+1}} ] \\
 & = \binom{A_0-C_0-1}{n-C_0-1} (a_p-c_p) + \binom{A_0-C_0}{n-C_0} c_p \\
 & = \binom{A_0-C_0}{n-C_0} [ \frac{(a_p-c_p)(n-C_0)}{A_0-C_0} + c_p ] \\
 & = \binom{A_0-C_0}{n-C_0} \frac{(n-C_0)a_p+(A_0-n)c_p}{A_0-C_0}
\end{split}
\end{equation}
The theorem is now proved for $1\leq p<m$,
but because the indices of the $k_i$ in the left side of the theorem
can be interchanged, the theorem is true for $1\leq p\leq m$.
\end{proof}
\begin{theorem}\label{theoremres1}
For integer $m\geq 1$:
\begin{equation}
\begin{split}
 & \multsum \left[ \prod_{i=1}^m \binom{a_i}{k_i}\binom{k_i}{c_i} \right] \sum_{i=1}^m x_i k_i \\
 & =  \binom{A_0-C_0}{n-C_0} \left[ \prod_{i=1}^m \binom{a_i}{c_i} \right]
  \frac{(n-C_0)A_1+(A_0-n)C_1}{A_0-C_0} \\
\end{split}
\end{equation}
\end{theorem}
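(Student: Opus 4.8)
The plan is to reduce this identity directly to Theorem~\ref{theorem1} by linearity. First I would move the summand factor $\sum_{i=1}^m x_i k_i$ outside the outer multiple sum via the reordering principle (\ref{multsumorder}) specialized to $s=1$, $p_1=1$ and $w_i=x_i$. Because each weight $x_p$ is constant with respect to the inner $k_i$-summation, this yields
\begin{equation}
 \multsum \left[ \prod_{i=1}^m \binom{a_i}{k_i}\binom{k_i}{c_i} \right] \sum_{i=1}^m x_i k_i
 = \sum_{p=1}^m x_p \multsum \left[ \prod_{i=1}^m \binom{a_i}{k_i}\binom{k_i}{c_i} \right] k_p .
\end{equation}

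Next I would apply Theorem~\ref{theorem1} to each inner sum, replacing the sum weighted by $k_p$ with $\binom{A_0-C_0}{n-C_0}\left[\prod_{i=1}^m\binom{a_i}{c_i}\right]\frac{(n-C_0)a_p+(A_0-n)c_p}{A_0-C_0}$. All prefactors here are independent of $p$ except the numerator, so I would pull $\binom{A_0-C_0}{n-C_0}\left[\prod_{i=1}^m\binom{a_i}{c_i}\right]/(A_0-C_0)$ out of the sum over $p$, leaving only $\sum_{p=1}^m x_p\big[(n-C_0)a_p+(A_0-n)c_p\big]$ to evaluate.

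Finally I would split this sum by linearity as $(n-C_0)\sum_{p=1}^m x_p a_p+(A_0-n)\sum_{p=1}^m x_p c_p$ and identify the two sums with $A_1$ and $C_1$ using the definitions $A_1=\sum_i x_i a_i$ and $C_1=\sum_i x_i c_i$. Reassembling produces the numerator $(n-C_0)A_1+(A_0-n)C_1$ and hence exactly the claimed right-hand side.

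Every step is an immediate consequence of a previously established result or a definition, so there is no genuine obstacle here; the only point requiring a moment's care is the validity of the reordering, which holds because the single summation index stays distinct and the power $p_1=1$ is therefore constant throughout the inner multiple sum.
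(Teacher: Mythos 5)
Your proposal is correct and follows exactly the paper's own argument: interchange the order of summation via (\ref{multsumorder}), apply Theorem~\ref{theorem1} to each weighted inner sum, and collapse the resulting sum over $p$ into $(n-C_0)A_1+(A_0-n)C_1$ using the definitions of $A_1$ and $C_1$. No gaps; nothing further is needed.
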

\begin{proof}
Changing the order of summation as in (\ref{multsumorder}) and using theorem \ref{theorem1}:
\begin{equation}
\begin{split}
 & \binom{A_0-C_0}{n-C_0} \left[ \prod_{i=1}^m \binom{a_i}{c_i} \right] 
   \sum_{i=1}^m x_i \frac{(n-C_0)a_i+(A_0-n)c_i}{A_0-C_0} \\
 & = \binom{A_0-C_0}{n-C_0} \left[ \prod_{i=1}^m \binom{a_i}{c_i} \right]
  \frac{(n-C_0)A_1+(A_0-n)C_1}{A_0-C_0} \\
\end{split}
\end{equation}
\end{proof}
\begin{theorem}\label{theorem2}
For integer $m\geq 1$ and $1\leq p\leq m$:
\begin{equation}
\begin{split}
 & \multsum \left[\prod_{i=1}^m \binom{a_i}{k_i}\binom{k_i}{c_i}\right] k_p^2 
   = \binom{A_0-C_0}{n-C_0} \left[\prod_{i=1}^m \binom{a_i}{c_i}\right] \\
 & \cdot \frac{(n-C_0)[(n-C_0-1)a_p^2+(A_0-n)(a_p-c_p+2a_pc_p)]+(A_0-n)(A_0-n-1)c_p^2}{(A_0-C_0)(A_0-C_0-1)} \\
\end{split}
\end{equation}
\end{theorem}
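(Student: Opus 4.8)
The plan is to follow the proof of Theorem \ref{theorem1} almost verbatim, replacing the linear modified geometric series by its quadratic analogue. As there, I would first establish the result for $1\le p<m$ and then invoke the symmetry of the left-hand side under permutation of the indices $k_i$ to obtain the case $p=m$. The new ingredient is the series
\begin{equation}
 \sum_{k=0}^{\infty} k^2 w^k = \frac{w(1+w)}{(1-w)^3}.
\end{equation}

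First I would apply the trinomial revision identity (\ref{trirev}), restrict to $c_i\le k_i\le a_i$, reduce the $m$-fold sum to an $(m-1)$-fold sum by setting $k_m$ as in (\ref{ksumdef}), and pass to the integral representation (\ref{binomdef}), reproducing the intermediate expression from the proof of Theorem \ref{theorem1} but with an extra factor $k_p^2$ inside the multiple sum. Summing the geometric series over every index $i\ne p$ and the quadratic series above over $k_p$, and simplifying $\frac{t(1+t)}{(1-t)^3}$ with $t=w_m/w_p$, the $w_p$-integrand acquires a third-order pole; the single spare factor $w_m$ produced by the series again lowers the exponent of $w_m$ by one, exactly as in the linear case, leaving
\begin{equation}
\begin{split}
 & \res{w_m} \frac{(1+w_m)^{a_m-c_m}}{w_m^{n-c_m}} \res{w_p} \frac{(1+w_p)^{a_p-c_p}w_p^{c_p}(w_p+w_m)}{(w_p-w_m)^3} \\
 & \quad\cdot \mulprod{i=1}{i\neq p}{m-1} (1+w_m)^{a_i-c_i} w_m^{c_i}. \\
\end{split}
\end{equation}

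The main obstacle is the evaluation of the order-three residue in $w_p$. Using (\ref{resdef}) with $k=3$, this equals $\frac{1}{2}D_{w_p}^2$ of the three-factor product $(1+w_p)^{a_p-c_p}w_p^{c_p}(w_p+w_m)$ evaluated at $w_p=w_m$; the second-derivative product rule for three factors (one of which is linear, so its second derivative vanishes) yields five terms. After collecting the surviving background factors into $\frac{(1+w_m)^{A_0-C_0-a_p+c_p}}{w_m^{n-C_0+c_p}}$ and taking the $w_m$-residue term by term via $\res{w_m}\frac{(1+w_m)^N}{w_m^{M+1}}=\binom{N}{M}$, the five terms collapse (combining the two contributions of each binomial) to
\begin{equation}
\begin{split}
 & (a_p-c_p)(a_p-c_p-1)\binom{A_0-C_0-2}{n-C_0-2} \\
 & \quad + (a_p-c_p)(2c_p+1)\binom{A_0-C_0-1}{n-C_0-1} + c_p^2\binom{A_0-C_0}{n-C_0}. \\
\end{split}
\end{equation}

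Finally I would factor out $\binom{A_0-C_0}{n-C_0}$ using $\binom{A_0-C_0-1}{n-C_0-1}=\binom{A_0-C_0}{n-C_0}\frac{n-C_0}{A_0-C_0}$ and $\binom{A_0-C_0-2}{n-C_0-2}=\binom{A_0-C_0}{n-C_0}\frac{(n-C_0)(n-C_0-1)}{(A_0-C_0)(A_0-C_0-1)}$, reducing the claim to a rational-function identity over $(A_0-C_0)(A_0-C_0-1)$. Writing $N=A_0-C_0$ and $K=n-C_0$, so that $A_0-n=N-K$, this is the routine polynomial check that $(a_p-c_p)(a_p-c_p-1)K(K-1)+(a_p-c_p)(2c_p+1)K(N-1)+c_p^2N(N-1)$ equals $K[(K-1)a_p^2+(N-K)(a_p-c_p+2a_pc_p)]+(N-K)(N-K-1)c_p^2$, which I would confirm by expansion. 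The case $p=m$ then follows by symmetry, as in Theorem \ref{theorem1}.
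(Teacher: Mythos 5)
Your proposal is correct and follows essentially the same route as the paper: integral representation, summing a modified geometric series over $k_p$, evaluating the resulting higher-order residue in $w_p$ by the product rule, and finishing with the absorption identity, with the case $p=m$ recovered by symmetry. The only difference is cosmetic: the paper writes $\sum_{k\ge 0} k^2w^k=\frac{w}{(1-w)^2}+\frac{2w^2}{(1-w)^3}$ and adds back the contribution of Theorem \ref{theorem1} for the first term, whereas you use the combined form $\frac{w(1+w)}{(1-w)^3}$ and absorb the extra linear factor into a single third-order residue --- both reduce to the same three-binomial expression $(a_p-c_p)(a_p-c_p-1)\binom{A_0-C_0-2}{n-C_0-2}+(a_p-c_p)(2c_p+1)\binom{A_0-C_0-1}{n-C_0-1}+c_p^2\binom{A_0-C_0}{n-C_0}$ before the final simplification.
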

\begin{proof}
The following modified infinite geometric series \cite{K19} is used:
\begin{equation}
 \sum_{k=0}^{\infty} k^2 w^k = \frac{w}{(1-w)^2} + \frac{2w^2}{(1-w)^3}
\end{equation}
The first term gives a result identical to theorem \ref{theorem1},
and for the second term:
\begin{equation}
\begin{split}
 &  \left[ \prod_{i=1}^m \res{w_i} \right] \frac{1}{w_m^n}
    \left[\prod_{i=1}^{m}(1+w_i)^{a_i-c_i}w_i^{c_i-1}\right] 
     \frac{2(\frac{\textstyle w_m}{\textstyle w_p})^2}{(1-\frac{\textstyle w_m}{\textstyle w_p})^3}
    \mulprod{i=1}{i\neq p}{m-1} \frac{1}{1-\frac{\textstyle w_m}{\textstyle w_i}} \\
 & = \res{w_m} \frac{(1+w_m)^{a_m-c_m}}{w_m^{n-c_m-1}} \res{w_p} \frac{2(1+w_p)^{a_p-c_p}w_p^{c_p}}{(w_p-w_m)^3}
     \mulprod{i=1}{i\neq p}{m-1} (1+w_m)^{a_i-c_i} w_m^{c_i} \\
\end{split}
\end{equation}
The residue of $w_p$ is evaluated with (\ref{resdef}) using the product
rule for the second derivative:
\begin{equation}
\begin{split}
 & \res{w_p} \frac{2(1+w_p)^{a_p-c_p}w_p^{c_p}}{(w_p-w_m)^3} =
  (a_p-c_p)(a_p-c_p-1)(1+w_m)^{a_p-c_p-2}w_m^{c_p} \\
 & \quad + 2(a_p-c_p)c_p(1+w_m)^{a_p-c_p-1}w_m^{c_p-1} + c_p(c_p-1)(1+w_m)^{a_p-c_p}w_m^{c_p-2} \\
\end{split}
\end{equation}
Using the absorption identity \cite{GKP94,K15}:
\begin{equation}
\begin{split}
 & \binom{A_0-C_0-2}{n-C_0-2}(a_p-c_p)(a_p-c_p-1) + 2 \binom{A_0-C_0-1}{n-C_0-1}(a_p-c_p)c_p \\
 & + \binom{A_0-C_0}{n-C_0}c_p(c_p-1) \\
 & = \binom{A_0-C_0}{n-C_0} [ \frac{(n-C_0)(n-C_0-1)}{(A_0-C_0)(A_0-C_0-1)}(a_p-c_p)(a_p-c_p-1) \\
 & \qquad\qquad + 2 \frac{n-C_0}{A_0-C_0}(a_p-c_p)c_p + c_p(c_p-1) ] \\
 & = \binom{A_0-C_0}{n-C_0} \frac{1}{(A_0-C_0)(A_0-C_0-1)} \\
 & \qquad \cdot \{(n-C_0)(a_p-c_p) [ (n-C_0-1)(a_p-c_p-1)+2(A_0-C_0-1)c_p ] \\
 & \qquad\quad + (A_0-C_0)(A_0-C_0-1)c_p(c_p-1) \} \\
\end{split}
\end{equation}
As mentioned the result of theorem \ref{theorem1} must be added, which gives:
\begin{equation}
\begin{split}
 & \binom{A_0-C_0}{n-C_0} \frac{1}{(A_0-C_0)(A_0-C_0-1)} \\
 & \qquad \cdot \{(n-C_0)(a_p-c_p) [ (n-C_0-1)(a_p-c_p-1)+2(A_0-C_0-1)c_p ] \\
 & \qquad\quad + (A_0-C_0-1)[ (A_0-C_0)c_p(c_p-1) + (n-C_0)a_p + (A_0-n)c_p ] \} \\
\end{split}
\end{equation}
This result simplifies to the theorem.
\end{proof}
\begin{theorem}\label{theorem11}
For integer $m\geq 2$, $1\leq p\leq m$, $1\leq q\leq m$ and $p\neq q$:
\begin{equation}
\begin{split}
 & \multsum \left[\prod_{i=1}^m \binom{a_i}{k_i}\binom{k_i}{c_i}\right] k_p k_q 
   = \binom{A_0-C_0}{n-C_0} \left[\prod_{i=1}^m \binom{a_i}{c_i}\right] \\
 & \cdot \frac{(n-C_0)[(n-C_0-1)a_pa_q+(A_0-n)(a_pc_q+c_pa_q)]+(A_0-n)(A_0-n-1)c_pc_q}{(A_0-C_0)(A_0-C_0-1)} \\
\end{split}
\end{equation}
\end{theorem}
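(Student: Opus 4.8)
The plan is to mirror the residue computations of Theorems \ref{theorem1} and \ref{theorem2}, the only new feature being that two of the free geometric series are modified simultaneously instead of one. First I would apply the trinomial revision (\ref{trirev}), pull out $\prod_{i=1}^m\binom{a_i}{c_i}$, and reduce the restricted $m$-fold sum to a free $(m-1)$-fold sum by eliminating one index through (\ref{ksumdef}); crucially, I would choose the eliminated index to be different from both $p$ and $q$, which is possible whenever $m\geq 3$. Under this choice I may assume $p$ and $q$ are both among the free indices $1,\dots,m-1$, and the integrand is exactly the one appearing in Theorem \ref{theorem0} before summation.

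Next I would insert the factor $k_pk_q$ and sum, using $\sum_{k=0}^\infty k\,w^k=\frac{w}{(1-w)^2}$ for the $p$-th and $q$-th series and the plain series (\ref{infgeom0}) for the remaining $m-2$ free indices. After simplification each of the $p$- and $q$-series contributes one factor $w_m$ and a double pole, so the $w_m$-integrand becomes $\frac{(1+w_m)^{a_m-c_m}}{w_m^{n-c_m-1}}$ times the two second-order residues. These residues are evaluated by (\ref{resdef}) exactly as in (\ref{resquad}), giving for each index a two-term expression, for instance $B_p=(a_p-c_p)(1+w_m)^{a_p-c_p-1}w_m^{c_p}+c_p(1+w_m)^{a_p-c_p}w_m^{c_p-1}$, and the analogous $B_q$.

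I would then expand the product $B_pB_q$ into four terms, absorb the factors $(1+w_m)^{a_i-c_i}w_m^{c_i}$ coming from the indices $i\neq p,q,m$, and evaluate $\res{w_m}\frac{(1+w_m)^N}{w_m^{M+1}}=\binom{N}{M}$ term by term. This produces the three-binomial expression
\[
\binom{A_0-C_0-2}{n-C_0-2}(a_p-c_p)(a_q-c_q)+\binom{A_0-C_0-1}{n-C_0-1}\bigl[(a_p-c_p)c_q+c_p(a_q-c_q)\bigr]+\binom{A_0-C_0}{n-C_0}c_pc_q.
\]
Factoring out $\binom{A_0-C_0}{n-C_0}$ by the absorption ratios and collecting the coefficients of $a_pa_q$, of $a_pc_q+c_pa_q$, and of $c_pc_q$ is then a routine check that reproduces the numerator of the theorem, with the respective coefficients $(n-C_0)(n-C_0-1)$, $(n-C_0)(A_0-n)$, and $(A_0-n)(A_0-n-1)$.

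The main obstacle is not the computation above but making the reduction legitimate for every admissible $p\neq q$. For $m\geq 3$ the formula extends to all $p\neq q$ by the index-interchange symmetry enjoyed by both sides, since the eliminated index can always be kept out of $\{p,q\}$. The genuinely special case is $m=2$, where $\{p,q\}=\{1,2\}$ forces one of the two indices to be the eliminated one, so the two-series evaluation is unavailable. Here I would instead use $k_2=n-k_1$, hence $k_1k_2=nk_1-k_1^2$, and combine Theorems \ref{theorem1} and \ref{theorem2} (with $a_2=A_0-a_1$ and $c_2=C_0-c_1$) to recover the stated result.
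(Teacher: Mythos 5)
Your proof follows essentially the same route as the paper's: two second-order residues of the form (\ref{resquad}), multiplied out into the same three-binomial expression and then reduced with the absorption identity. The one genuine addition is your separate treatment of $m=2$ via $k_1k_2=nk_1-k_1^2$ together with Theorems \ref{theorem1} and \ref{theorem2}; the paper's proof silently relies on the index-interchange argument of Theorem \ref{theorem1}, which cannot place both $p$ and $q$ among the free indices when $m=2$, so your extra step actually closes a small gap rather than introducing one.
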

\begin{proof}
The proof is similar to theorem \ref{theorem1} but with two quadratic residues
instead of one.
\begin{equation}
\begin{split}
 & \res{w_m} \frac{(1+w_m)^{a_m-c_m}}{w_m^{n-c_m-1}} \res{w_p} \frac{(1+w_p)^{a_p-c_p}w_p^{c_p}}{(w_p-w_m)^2}
    \res{w_q} \frac{(1+w_q)^{a_p-c_p}w_q^{c_p}}{(w_q-w_m)^2} \\
 & \qquad \cdot \mulprod{i=1}{i\neq p, i\neq q}{m-1} (1+w_m)^{a_i-c_i} w_m^{c_i} \\
\end{split}
\end{equation}
The two last residues are given by (\ref{resquad}), and multiplying these out gives:
\begin{equation}
\begin{split}
 & [(a_p-c_p)(1+w_m)^{a_p-c_p-1}w_m^{c_p} + c_p(1+w_m)^{a_p-c_p}w_m^{c_p-1} ] \\
 & \cdot [(a_q-c_q)(1+w_m)^{a_q-c_q-1}w_m^{c_q} + c_q(1+w_m)^{a_q-c_q}w_m^{c_q-1} ] \\
 & = (a_p-c_p)(a_q-c_q)(1+w_m)^{a_p-c_p+a_q-c_q-2}w_m^{c_p+c_q} \\
 & \quad + [ (a_p-c_p)c_q + (a_q-c_q)c_p ] (1+w_m)^{a_p-c_p+a_q-c_q-1} w_m^{c_p+c_q-1} \\
 & \quad + c_pc_q(1+w_m)^{a_p-c_p+a_q-c_q}w_m^{c_p+c_q-2} \\
\end{split}
\end{equation}
This yields the following:
\begin{equation}
\begin{split}
 & \binom{A_0-C_0-2}{n-C_0-2}(a_p-c_p)(a_q-c_q) + \binom{A_0-C_0-1}{n-C_0-1}  [ (a_p-c_p)c_q + (a_q-c_q)c_p ] \\
 & \quad + \binom{A_0-C_0}{n-C_0} c_pc_q \\
\end{split}
\end{equation}
and using again the absorption identity:
\begin{equation}
\begin{split}
 & \binom{A_0-C_0}{n-C_0} \frac{1}{(A_0-C_0)(A_0-C_0-1)} \\
 & \cdot \{ (n-C_0) [(n-C_0-1)(a_p-c_p)(a_q-c_q) + (A_0-C_0-1)((a_p-c_p)c_q+(a_q-c_q)c_p)] \\
 & \qquad + (A_0-C_0)(A_0-C_0-1)c_pc_q \} \\
\end{split}
\end{equation}
This result simplifies to the theorem.
\end{proof}
\begin{theorem}\label{theoremres2}
For integer $m\geq 1$:
\begin{equation}
\begin{split}
 & \multsum \left[ \prod_{i=1}^m \binom{a_i}{k_i}\binom{k_i}{c_i} \right] (\sum_{i=1}^m x_i k_i)(\sum_{i=1}^m y_i k_i) \\
 & =  \binom{A_0-C_0}{n-C_0} \left[ \prod_{i=1}^m \binom{a_i}{c_i} \right] \frac{1}{(A_0-C_0)(A_0-C_0-1)} \\
 & \quad\cdot \{ (n-C_0)[(n-C_0-1)A_1A_1^*+(A_0-n)(A_{1,1}^*-C_{1,1}^*+A_1C_1^*+A_1^*C_1)] \\
 & \quad\quad +(A_0-n)(A_0-n-1)C_1C_1^* \} \\
\end{split}
\end{equation}
\end{theorem}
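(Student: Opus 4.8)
The plan is to follow the same reduction strategy used for Theorem \ref{theoremres1}, expressing the bilinear sum in terms of the already-proved single-index evaluations by changing the order of summation as in (\ref{multsumorder}). First I would expand the product $(\sum_i x_i k_i)(\sum_i y_i k_i)$ into its off-diagonal and diagonal parts using the displayed splitting identity, writing it as $\dblsum{p}{q} x_p y_q k_p k_q + \sum_{p=1}^m x_p y_p k_p^2$. Pulling the coefficients $x_p y_q$ and $x_p y_p$ outside the inner multiple summation then reduces the problem to evaluating $\multsum \left[ \prod_{i=1}^m \binom{a_i}{k_i}\binom{k_i}{c_i} \right] k_p k_q$ for $p\neq q$ and $\multsum \left[ \prod_{i=1}^m \binom{a_i}{k_i}\binom{k_i}{c_i} \right] k_p^2$, which are exactly the closed forms supplied by Theorems \ref{theorem11} and \ref{theorem2}.

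Next I would substitute those two closed forms, factor out the common prefactor $\binom{A_0-C_0}{n-C_0} [\prod_{i=1}^m \binom{a_i}{c_i}] / [(A_0-C_0)(A_0-C_0-1)]$, and collect the surviving terms into the three coefficient groups $(n-C_0)(n-C_0-1)$, $(n-C_0)(A_0-n)$, and $(A_0-n)(A_0-n-1)$. The guiding observation is that each group should recombine into a full double sum over all pairs $(p,q)$ that factors into the defined quantities: for the leading group the off-diagonal contribution $\sum_{p\neq q} x_p y_q a_p a_q$ together with the diagonal contribution $\sum_p x_p y_p a_p^2$ gives $\sum_{p,q} x_p y_q a_p a_q = A_1 A_1^*$, and symmetrically the constant group assembles to $C_1 C_1^*$, matching the outer coefficients in the theorem.

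The step I expect to be the main obstacle is the mixed group with coefficient $(n-C_0)(A_0-n)$, where the diagonal and off-diagonal pieces do not have the same shape. Here Theorem \ref{theorem11} supplies the off-diagonal sum $\sum_{p\neq q} x_p y_q (a_p c_q + c_p a_q)$, which is the full double sum $A_1 C_1^* + A_1^* C_1$ minus its diagonal $2\sum_p x_p y_p a_p c_p$, whereas Theorem \ref{theorem2} contributes the diagonal term $\sum_p x_p y_p (a_p - c_p + 2a_p c_p)$. The plan is to verify that the $+2a_p c_p$ piece of the diagonal term exactly cancels the $-2\sum_p x_p y_p a_p c_p$ deficit from extending the off-diagonal sum to a full sum, leaving the residual diagonal $\sum_p x_p y_p (a_p - c_p) = A_{1,1}^* - C_{1,1}^*$. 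Adding this residual to the recombined off-diagonal sum yields $A_{1,1}^* - C_{1,1}^* + A_1 C_1^* + A_1^* C_1$, precisely the bracketed factor of the mixed group. Reassembling the three groups over the common prefactor then produces the stated identity.
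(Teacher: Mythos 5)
Your proposal is correct and follows essentially the same route as the paper: expand the bilinear form into off-diagonal and diagonal parts, invoke Theorems \ref{theorem11} and \ref{theorem2}, and recombine into full double sums plus the residual diagonal $(n-C_0)(A_0-n)\sum_i x_iy_i(a_i-c_i)=(n-C_0)(A_0-n)(A_{1,1}^*-C_{1,1}^*)$. The cancellation of the $2a_pc_p$ diagonal deficit that you flag as the main obstacle is exactly the step the paper's proof carries out, so nothing is missing.
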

\begin{proof}
The product in the summand becomes:
\begin{equation}
 (\sum_{i=1}^m x_i k_i)(\sum_{i=1}^m y_i k_i) 
  = \dblsum{i}{j} x_i y_j k_i k_j 
   + \sum_{i=1}^m x_iy_ik_i^2 
\end{equation}
Changing the order of summation as in (\ref{multsumorder}) and using theorem \ref{theorem2} and \ref{theorem11}.
\begin{equation}
\begin{split}
 & \dblsum{i}{j} x_iy_j \{ (n-C_0)[(n-C_0-1)a_ia_j + (A_0-n)(a_ic_j+c_ia_j)] \\
 & \qquad\qquad + (A_0-n)(A_0-n-1)c_ic_j \} \\
 & \qquad + \sum_{i=1}^m x_iy_i \{ (n-C_0)[(n-C_0-1)a_i^2 + (A_0-n)(a_i-c_i+2a_ic_i) ] \\
 & \qquad\qquad + (A_0-n)(A_0-n-1) c_i^2 \} \\
 & = \sum_{i=1}^m\sum_{j=1}^m x_iy_j \{ (n-C_0)[(n-C_0-1)a_ia_j + (A_0-n)(a_ic_j+c_ia_j)] \\
 & \qquad\qquad + (A_0-n)(A_0-n-1)c_ic_j \} \\
 & \qquad + \sum_{i=1}^m x_iy_i (n-C_0)(A_0-n) (a_i-c_i) \\
 & = (n-C_0) [(n-C_0-1)A_1A_1^* + (A_0-n)(A_{1,1}^*-C_{1,1}^*+A_1C_1^*+A_1^*C_1) ] \\
 & \qquad + (A_0-n)(A_0-n-1) C_1C_1^* \\
\end{split}
\end{equation}
\end{proof}
\begin{theorem}\label{theoremres3}
For integer $m\geq 1$:
\begin{equation}
\begin{split}
 & \multsum \left[ \prod_{i=1}^m \binom{a_i}{k_i}\binom{k_i}{c_i} \right] (\sum_{i=1}^m x_i k_i)^2
  =  \binom{A_0-C_0}{n-C_0} \left[ \prod_{i=1}^m \binom{a_i}{c_i} \right] \\
 & \cdot \frac{(n-C_0)[(n-C_0-1)A_1^2+(A_0-n)(A_2-C_2+2A_1C_1)]+(A_0-n)(A_0-n-1)C_1^2}{(A_0-C_0)(A_0-C_0-1)} \\
\end{split}
\end{equation}
\end{theorem}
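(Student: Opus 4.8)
The plan is to derive this quadratic identity as the diagonal specialization of the already-proven bilinear Theorem~\ref{theoremres2}, setting $y_i = x_i$ for every $i$. Under this substitution the summand $(\sum_{i=1}^m x_i k_i)(\sum_{i=1}^m y_i k_i)$ becomes exactly $(\sum_{i=1}^m x_i k_i)^2$, so all that remains is to track how the starred quantities on the right side of Theorem~\ref{theoremres2} collapse. Directly from the definitions, $A_1^* = \sum_{i=1}^m y_i a_i$ becomes $A_1$, $C_1^* = \sum_{i=1}^m y_i c_i$ becomes $C_1$, $A_{1,1}^* = \sum_{i=1}^m x_i y_i a_i$ becomes $\sum_{i=1}^m x_i^2 a_i = A_2$, and $C_{1,1}^* = \sum_{i=1}^m x_i y_i c_i$ becomes $\sum_{i=1}^m x_i^2 c_i = C_2$.

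Substituting these into the right side of Theorem~\ref{theoremres2}, the three pieces transform as $A_1 A_1^* \to A_1^2$, the bracketed middle term $A_{1,1}^* - C_{1,1}^* + A_1 C_1^* + A_1^* C_1 \to A_2 - C_2 + A_1 C_1 + A_1 C_1 = A_2 - C_2 + 2A_1 C_1$, and $C_1 C_1^* \to C_1^2$. The binomial prefactor, the product $\prod_{i=1}^m \binom{a_i}{c_i}$, and the denominator $(A_0-C_0)(A_0-C_0-1)$ are all independent of the $y_i$ and so are unchanged. This reproduces the claimed numerator term by term, completing the proof.

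Alternatively, I would prove the identity directly, paralleling the proof of Theorem~\ref{theoremres2}. First expand the square into a restricted off-diagonal part and a diagonal part,
\[
 \left(\sum_{i=1}^m x_i k_i\right)^2 = \dblsum{i}{j} x_i x_j k_i k_j + \sum_{i=1}^m x_i^2 k_i^2,
\]
then interchange the order of summation as in (\ref{multsumorder}) and apply Theorem~\ref{theorem11} to each $k_i k_j$ term and Theorem~\ref{theorem2} to each $k_i^2$ term. The main obstacle in this direct route is purely bookkeeping: one must fold the diagonal $i=j$ contributions into the off-diagonal sum so that the restricted $\dblsum{i}{j}$ can be promoted to the unrestricted $\sum_{i=1}^m \sum_{j=1}^m$. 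Comparing the $i=j$ term of the Theorem~\ref{theorem11} formula with the Theorem~\ref{theorem2} formula leaves a residual diagonal term $(n-C_0)(A_0-n)\sum_{i=1}^m x_i^2(a_i-c_i) = (n-C_0)(A_0-n)(A_2-C_2)$, and it is precisely this leftover that converts the $2A_1 C_1$ coming from the promoted double sum into the full middle factor $A_2 - C_2 + 2A_1 C_1$. Since the bilinear theorem is already in hand, however, the specialization route bypasses this recomputation entirely, and that is the argument I would present.
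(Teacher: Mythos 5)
Your primary argument---specializing Theorem~\ref{theoremres2} by setting $y_i=x_i$ and noting that $A_1^*\to A_1$, $C_1^*\to C_1$, $A_{1,1}^*\to A_2$, $C_{1,1}^*\to C_2$---is exactly the paper's own proof, and your verification of how each term collapses is correct. The alternative direct route you sketch is also sound but unnecessary given that the bilinear theorem is already established.
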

\begin{proof}
In theorem \ref{theoremres2} taking $y_i=x_i$, then $A_1^*=A_1$, $C_1^*=C_1$, $A_{1,1}^*=A_2$ and $C_{1,1}^*=C_2$,
giving this theorem.
\end{proof}
\begin{theorem}\label{theoremabs}
\begin{equation}
\begin{split}
 & \multsum \left[ \prod_{i=1}^m \binom{a_i}{k_i}\binom{k_i}{c_i} \right] |\sum_{i=1}^m x_i k_i|^2 \\
 & =  \binom{A_0-C_0}{n-C_0} \left[ \prod_{i=1}^m \binom{a_i}{c_i} \right] \frac{1}{(A_0-C_0)(A_0-C_0-1)} \\
 & \qquad\cdot \{ (n-C_0)[(n-C_0-1)|A_1|^2+(A_0-n)(\Aabs-\Cabs+A_1\overline{C_1}+\overline{A_1}C_1)] \\
 & \qquad\quad +(A_0-n)(A_0-n-1)|C_1|^2 \} \\
\end{split}
\end{equation}
\end{theorem}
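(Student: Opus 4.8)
The plan is to deduce this theorem directly from Theorem~\ref{theoremres2}, in exactly the same spirit as Theorem~\ref{theoremres3} was obtained by setting $y_i=x_i$. The key observation is that the summation indices $k_i$ are nonnegative integers, hence real, so that
\[
 |\sum_{i=1}^m x_i k_i|^2
 = (\sum_{i=1}^m x_i k_i)\,\overline{(\sum_{i=1}^m x_i k_i)}
 = (\sum_{i=1}^m x_i k_i)(\sum_{i=1}^m \overline{x_i}\, k_i).
\]
This places the summand in precisely the bilinear form treated in Theorem~\ref{theoremres2}, with the second family of coefficients taken to be $y_i=\overline{x_i}$.

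With this substitution I would verify how each starred quantity appearing in Theorem~\ref{theoremres2} specializes. Since $A_1^*=\sum_i y_i a_i=\sum_i \overline{x_i} a_i=\overline{A_1}$ and likewise $C_1^*=\overline{C_1}$, the outer products become $A_1A_1^*=|A_1|^2$ and $C_1C_1^*=|C_1|^2$, while the cross term $A_1C_1^*+A_1^*C_1$ becomes $A_1\overline{C_1}+\overline{A_1}C_1$. In the same way $A_{1,1}^*=\sum_i x_i\overline{x_i} a_i=\sum_i |x_i|^2 a_i=\Aabs$ and $C_{1,1}^*=\sum_i x_i\overline{x_i} c_i=\Cabs$. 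Substituting these identities into the right side of Theorem~\ref{theoremres2} produces exactly the claimed expression.

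Because every step is a direct substitution into an already-proved identity, there is no genuine obstacle here. The only points requiring care are confirming that the conjugate coefficients $y_i=\overline{x_i}$ are admissible inputs to Theorem~\ref{theoremres2}—which they are, since that theorem holds for an arbitrary set $\{y_i\}$ of complex numbers—and that the reality of the $k_i$ legitimately justifies rewriting the squared modulus as the bilinear product displayed above.
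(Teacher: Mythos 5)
Your proposal is correct, but it takes a genuinely different route from the paper. You reduce the statement to a single application of Theorem~\ref{theoremres2} with $y_i=\overline{x_i}$, using that the $k_i$ are real to write $|\sum_i x_i k_i|^2=(\sum_i x_i k_i)(\sum_i \overline{x_i}k_i)$; since Theorem~\ref{theoremres2} is stated for arbitrary complex $\{y_i\}$ and the $a_i$, $c_i$ are real, the specializations $A_1^*=\overline{A_1}$, $C_1^*=\overline{C_1}$, $A_{1,1}^*=\Aabs$, $C_{1,1}^*=\Cabs$ all check out and the claimed formula drops out immediately. The paper instead splits the modulus squared into real and imaginary parts, $|\sum_i x_ik_i|^2=(\sum_i\re(x_i)k_i)^2+(\sum_i\im(x_i)k_i)^2$, applies Theorem~\ref{theoremres3} twice with the real coefficient sets $\{\re(x_i)\}$ and $\{\im(x_i)\}$, and then recombines using identities such as $2[\re(a)\re(c)+\im(a)\im(c)]=a\overline{c}+\overline{a}c$. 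Both arguments are valid; yours is shorter and parallels exactly how the paper itself derives Theorem~\ref{theoremres3} from Theorem~\ref{theoremres2} (by the substitution $y_i=x_i$), avoiding the bookkeeping of reassembling moduli from real and imaginary parts, while the paper's version only ever feeds real coefficients into the quadratic identity, at the cost of several auxiliary verifications.
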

\begin{proof}
For this theorem the following is used.
\begin{equation}
  |\sum_{i=1}^m x_ik_i|^2 = \re^2(\sum_{i=1}^m x_ik_i) + \im^2(\sum_{i=1}^m x_ik_i) = (\sum_{i=0}^m \re(x_i)k_i)^2 + (\sum_{i=0}^m \im(x_i)k_i)^2 
\end{equation}
Both of these terms are evaluated by theorem \ref{theoremres3}, replacing $x_i$ by $\re(x_i)$ and then by $\im(x_i)$,
and then the results are added, using the following.
\begin{equation}
 (\sum_{i=1}^m \re(x_i)a_i)^2 + (\sum_{i=1}^m \im(x_i)a_i)^2 = \re^2(\sum_{i=1}^m x_ia_i) + \im^2(\sum_{i=1}^m x_ia_i)
 = |A_1|^2
\end{equation}
\begin{equation}
 (\sum_{i=1}^m \re(x_i)c_i)^2 + (\sum_{i=1}^m \im(x_i)c_i)^2 = \re^2(\sum_{i=1}^m x_ic_i) + \im^2(\sum_{i=1}^m x_ic_i)
 = |C_1|^2
\end{equation}
\begin{equation}
 \sum_{i=1}^m \re^2(x_i)a_i + \sum_{i=1}^m \im^2(x_i)a_i = \sum_{i=1}^m [\re^2(x_i)+\im^2(x_i)]a_i
 = \sum_{i=1}^m |x_i|^2 a_i = \Aabs
\end{equation}
\begin{equation}
 \sum_{i=1}^m \re^2(x_i)c_i + \sum_{i=1}^m \im^2(x_i)c_i = \sum_{i=1}^m [\re^2(x_i)+\im^2(x_i)]c_i
 = \sum_{i=1}^m |x_i|^2 c_i = \Cabs
\end{equation}
and using for complex $a$, $c$: $2[\re(a)\re(c)+\im(a)\im(c)]=a\overline{c}+\overline{a}c$ \cite{A79}:
\begin{equation}
\begin{split}
 & 2 [ (\sum_{i=1}^m \re(x_i)a_i)(\sum_{i=1}^m \re(x_i)c_i) + (\sum_{i=1}^m \im(x_i)a_i)(\sum_{i=1}^m \im(x_i)a_i) ] \\
 & = 2 [ \re(\sum_{i=1}^m x_ia_i)\re(\sum_{i=1}^m x_ic_i) + \im(\sum_{i=1}^m x_ia_i)\im(\sum_{i=1}^m x_ic_i) ] \\
 & = A_1\overline{C_1} + \overline{A_1}C_1 \\
\end{split}
\end{equation}
and the theorem is proved.
\end{proof}
\begin{theorem}
\begin{equation}
\begin{split}
 & \multsum \left[ \prod_{i=1}^m \binom{a_i}{k_i}\binom{k_i}{c_i} \right] \sum_{i=1}^m x_i k_i^2 
  =  \binom{A_0-C_0}{n-C_0} \left[ \prod_{i=1}^m \binom{a_i}{c_i} \right] \\
 & \cdot \frac{ (n-C_0)[(n-C_0-1)A_{1,2}+(A_0-n)(A_1-C_1+2S_{1,1})] +(A_0-n)(A_0-n-1)C_{1,2} }{(A_0-C_0)(A_0-C_0-1)} \\
\end{split}
\end{equation}
\end{theorem}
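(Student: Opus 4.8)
The plan is to mirror the proof of Theorem \ref{theoremres2}, but the situation here is simpler because the summand $\sum_{i=1}^m x_i k_i^2$ is not a product of sums: it is already a single weighted sum of the power combination $k_p^2$. Hence no splitting into mutually unequal indices (as was needed for $(\sum_{i=1}^m x_i k_i)(\sum_{i=1}^m y_i k_i)$) is required, and only the combination $k_p^2$ appears. First I would change the order of summation exactly as in (\ref{multsumorder}), pulling the weight $x_p$ outside and leaving the inner multiple sum of $k_p^2$ against the product $\prod_{i=1}^m\binom{a_i}{k_i}\binom{k_i}{c_i}$ to be evaluated.

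That inner sum is precisely the content of Theorem \ref{theorem2}. Substituting its value and factoring out the common prefactor $\binom{A_0-C_0}{n-C_0}\prod_{i=1}^m\binom{a_i}{c_i}$ together with the denominator $(A_0-C_0)(A_0-C_0-1)$, the remaining task is to evaluate
\begin{equation*}
 \sum_{p=1}^m x_p\left\{(n-C_0)[(n-C_0-1)a_p^2+(A_0-n)(a_p-c_p+2a_pc_p)]+(A_0-n)(A_0-n-1)c_p^2\right\}.
\end{equation*}
The final step is to read off each $p$-sum against the definitions in Section 1: the term $\sum_p x_p a_p^2$ is $A_{1,2}$, the terms $\sum_p x_p a_p$ and $\sum_p x_p c_p$ are $A_1$ and $C_1$, the mixed term $\sum_p x_p a_p c_p$ is $S_{1,1}$ (so the factor $2a_pc_p$ contributes $2S_{1,1}$), and $\sum_p x_p c_p^2$ is $C_{1,2}$. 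Collecting these reproduces the stated right-hand side.

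As for the main obstacle: there is none of substance, since the result follows immediately once Theorem \ref{theorem2} is available. The only point requiring care is the bookkeeping of the definitions, in particular recognizing that the diagonal cross term $2a_pc_p$ contributes the mixed quantity $S_{1,1}$ rather than a product $A_1C_1$. The latter would only arise from a genuine product of two distinct index-sums (the off-diagonal $k_pk_q$ contribution appearing in Theorem \ref{theoremres2}), which is absent here; making this distinction explicit is what prevents conflating $S_{1,1}$ with the $A_1C_1$ terms that occur in the quadratic product identities.
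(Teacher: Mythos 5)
Your proposal is correct and follows exactly the paper's own proof: change the order of summation as in (\ref{multsumorder}), substitute the value of the inner sum from Theorem \ref{theorem2}, and read off $A_{1,2}$, $A_1$, $C_1$, $S_{1,1}$ and $C_{1,2}$ from the definitions. Your remark distinguishing the diagonal contribution $2S_{1,1}$ from the $A_1C_1$ cross terms of the product identities is a correct and worthwhile clarification, but the route is identical to the paper's.
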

\begin{proof}
Changing the order of summation as in (\ref{multsumorder}) and using theorem \ref{theorem2}.
\begin{equation}
\begin{split}
 & \sum_{i=1}^m x_i \{ (n-C_0)[(n-C_0-1)a_i^2 + (A_0-n)(a_i-c_i+2a_ic_i) ] + (A_0-n)(A_0-n-1)c_i^2 \} \\
 & = (n-C_0)[(n-C_0-1)A_{1,2} + (A_0-n)(A_1-C_1+2S_{1,1}) ] + (A_0-n)(A_0-n-1) C_{1,2} \\
\end{split}
\end{equation}
\end{proof}
\begin{theorem}\label{theorem3}
For integer $m\geq 1$ and $1\leq p\leq m$:
\begin{equation}
\begin{split}
 & \multsum \left[\prod_{i=1}^m \binom{a_i}{k_i}\binom{k_i}{c_i}\right] k_p^3 \\
 & = \binom{A_0-C_0}{n-C_0} \left[\prod_{i=1}^m \binom{a_i}{c_i}\right] \frac{1}{(A_0-C_0)(A_0-C_0-1)(A_0-C_0-2)} \\
 & \cdot \{ (n-C_0)(n-C_0-1) [ (n-C_0-2)a_p^3 + (A_0-n)(3a_p^2c_p+3a_p^2-3a_pc_p-a_p+c_p) ] \\
 & + (A_0-n)(A_0-n-1) [ (A_0-n-2)c_p^3 + (n-C_0)(3a_pc_p^2-3c_p^2+3a_pc_p+a_p-c_p) ] \} \\
\end{split}
\end{equation}
\end{theorem}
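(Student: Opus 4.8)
The plan is to mirror the proof of Theorem \ref{theorem2} step for step, now with the cubic modified geometric series. Writing $k^3 = k(k-1)(k-2)+3k(k-1)+k$ and summing termwise against $\sum_{k\geq 0}\binom{k}{r}w^k = w^r/(1-w)^{r+1}$ gives
\begin{equation}
 \sum_{k=0}^{\infty} k^3 w^k = \frac{w}{(1-w)^2} + \frac{6w^2}{(1-w)^3} + \frac{6w^3}{(1-w)^4}
\end{equation}
The first term reproduces exactly the computation of Theorem \ref{theorem1}, and the middle term is precisely $3$ times the ``second term'' already evaluated inside the proof of Theorem \ref{theorem2} (there the coefficient was $\frac{2w^2}{(1-w)^3}$). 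Both of these contributions can therefore be imported without recomputation; only the last term $\frac{6w^3}{(1-w)^4}$ requires genuinely new work.

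For that term I would perform the same substitution and residue bookkeeping as before. Using $(w_m/w_p)^3/(1-w_m/w_p)^4 = w_m^3 w_p/(w_p-w_m)^4$ and absorbing the factor $w_m^3$ into the $w_m$-power (which lowers its exponent by three), the contribution reduces to
\begin{equation}
 \res{w_m} \frac{(1+w_m)^{a_m-c_m}}{w_m^{n-c_m-2}} \res{w_p} \frac{6(1+w_p)^{a_p-c_p}w_p^{c_p}}{(w_p-w_m)^4} \mulprod{i=1}{i\neq p}{m-1} (1+w_m)^{a_i-c_i} w_m^{c_i}
\end{equation}
By (\ref{resdef}) the inner residue equals $D_{w_p}^3[(1+w_p)^{a_p-c_p}w_p^{c_p}]|_{w_p=w_m}$ (the prefactor $6$ cancelling the $3!$), and the third-order Leibniz rule produces four terms carrying falling-factorial coefficients in $a_p-c_p$ and in $c_p$. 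Taking the $w_m$-residue of each and applying the absorption identity, exactly as in Theorems \ref{theorem1} and \ref{theorem2}, turns the last term into
\begin{equation}
\begin{split}
 & (a_p-c_p)(a_p-c_p-1)(a_p-c_p-2)\binom{A_0-C_0-3}{n-C_0-3}
   + 3(a_p-c_p)(a_p-c_p-1)c_p\binom{A_0-C_0-2}{n-C_0-2} \\
 & + 3(a_p-c_p)c_p(c_p-1)\binom{A_0-C_0-1}{n-C_0-1}
   + c_p(c_p-1)(c_p-2)\binom{A_0-C_0}{n-C_0} \\
\end{split}
\end{equation}

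The final step is to rewrite each binomial as $\binom{A_0-C_0}{n-C_0}$ times the appropriate ratio of falling factorials in $(n-C_0)$ over $(A_0-C_0)$, add the three contributions (this last term, $3$ times the Theorem \ref{theorem2} second term, and the Theorem \ref{theorem1} result), and pull out the common factor $\binom{A_0-C_0}{n-C_0}/[(A_0-C_0)(A_0-C_0-1)(A_0-C_0-2)]$. I expect the hard part to be this closing polynomial simplification: one must regroup the many monomials in $a_p$ and $c_p$ so that the coefficients consolidate into $(n-C_0)(n-C_0-1)$ and $(A_0-n)(A_0-n-1)$, using $A_0-n=(A_0-C_0)-(n-C_0)$ repeatedly, until the compact grouped form stated in the theorem emerges. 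This collapse is lengthy but entirely mechanical and follows the identical pattern already seen at the end of the proof of Theorem \ref{theorem2}.
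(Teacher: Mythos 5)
Your proposal follows the paper's own proof essentially step for step: the same decomposition $\sum_k k^3w^k = \frac{w}{(1-w)^2}+\frac{6w^2}{(1-w)^3}+\frac{6w^3}{(1-w)^4}$, reuse of the Theorem \ref{theorem1} and Theorem \ref{theorem2} contributions (with the factor $3$ on the middle term, which you state more explicitly than the paper does), the same reduction of the new term to $\res{w_p}\frac{6(1+w_p)^{a_p-c_p}w_p^{c_p}}{(w_p-w_m)^4}$ evaluated by the third-order Leibniz rule, and the same absorption-identity bookkeeping. The concluding simplification you defer as ``mechanical'' is likewise left implicit in the paper, so the proposal is correct and matches the paper's argument.
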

\begin{proof}
The following modified infinite geometric series \cite{K19} is used:
\begin{equation}
 \sum_{k=0}^{\infty} k^3 w^k = \frac{w}{(1-w)^2} + \frac{6w^2}{(1-w)^3} + \frac{6w^3}{(1-w)^4}
\end{equation}
The first two terms gives a result identical to theorem \ref{theorem1} and \ref{theorem2},
and for the third term:
\begin{equation}
\begin{split}
 &  \left[ \prod_{i=1}^m \res{w_i} \right] \frac{1}{w_m^n}
    \left[\prod_{i=1}^{m}(1+w_i)^{a_i-c_i}w_i^{c_i-1}\right] 
     \frac{6(\frac{\textstyle w_m}{\textstyle w_p})^3}{(1-\frac{\textstyle w_m}{\textstyle w_p})^4}
    \mulprod{i=1}{i\neq p}{m-1} \frac{1}{1-\frac{\textstyle w_m}{\textstyle w_i}} \\
 & = \res{w_m} \frac{(1+w_m)^{a_m-c_m}}{w_m^{n-c_m-2}} \res{w_p} \frac{6(1+w_p)^{a_p-c_p}w_p^{c_p}}{(w_p-w_m)^4}
     \mulprod{i=1}{i\neq p}{m-1} (1+w_m)^{a_i-c_i} w_m^{c_i} \\
\end{split}
\end{equation}
The residue of $w_p$ is evaluated with (\ref{resdef}) using the product
rule for the third derivative:
\begin{equation}
\begin{split}
 & \res{w_p} \frac{6(1+w_p)^{a_p-c_p}w_p^{c_p}}{(w_p-w_m)^4} \\
 & = (a_p-c_p)(a_p-c_p-1)(a_p-c_p-2)(1+w_m)^{a_p-c_p-3}w_m^{c_p} \\
 & \quad + 3(a_p-c_p)(a_p-c_p-1)c_p(1+w_m)^{a_p-c_p-2}w_m^{c_p-1} \\
 & \quad + 3(a_p-c_p)c_p(c_p-1)(1+w_m)^{a_p-c_p-1}w_m^{c_p-2} \\
 & \quad + c_p(c_p-1)(c_p-2)(1+w_m)^{a_p-c_p}w_m^{c_p-3} \\
\end{split}
\end{equation}
Using the absorption identity \cite{GKP94,K15} as in theorem \ref{theorem2},
and adding as mentioned the result of theorem \ref{theorem1} and \ref{theorem2}:
\begin{equation}
\begin{split}
 & \binom{A_0-C_0}{n-C_0} \frac{1}{(A_0-C_0)(A_0-C_0-1)(A_0-C_0-2)} \\
 & \cdot \{(n-C_0)(n-C_0-1)(a_p-c_p)(a_p-c_p-1) \\
 & \qquad \cdot [ (n-C_0-2)(a_p-c_p-2)+3(A_0-C_0-2)c_p ] \\
 & \quad + (A_0-C_0-1)(A_0-C_0-2)c_p(c_p-1) \\
 & \qquad \cdot [ (A_0-C_0)(c_p-2) + 3(n-C_0)(a_p-c_p) ] \\
 & \quad + (A_0-C_0-2) [ 3(n-C_0)(a_p-c_p)((n-C_0-1)(a_p-c_p-1)+2(A_0-C_0-1)c_p) \\
 & \qquad + (A_0-C_0-1) ( 3(A_0-C_0)c_p(c_p-1)+(n-C_0)a_p+(A_0-n)c_p) ] \} \\
\end{split}
\end{equation}
This result simplifies to the theorem.
\end{proof}
\begin{theorem}\label{theorem12}
For integer $m\geq 2$, $1\leq p\leq m$, $1\leq q\leq m$ and $p\neq q$:
\begin{equation}
\begin{split}
 & \multsum \left[\prod_{i=1}^m \binom{a_i}{k_i}\binom{k_i}{c_i}\right] k_p k_q^2 \\ 
 &  = \binom{A_0-C_0}{n-C_0} \left[\prod_{i=1}^m \binom{a_i}{c_i}\right] \frac{1}{(A_0-C_0)(A_0-C_0-1)(A_0-C_0-2)} \\
 & \cdot \{ (n-C_0)(n-C_0-1) [ (n-C_0-2)a_pa_q^2 + (A_0-n)(c_pa_q^2+a_pa_q-a_pc_q+2a_pa_qc_q) ] \\
 & + (A_0-n)(A_0-n-1) [ (A_0-n-2)c_pc_q^2 + (n-C_0)(a_pc_q^2-c_pc_q+c_pa_q+2c_pa_qc_q) ] \} \\
\end{split}
\end{equation}
\end{theorem}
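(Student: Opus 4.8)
The plan is to follow the residue template of Theorems \ref{theorem1}, \ref{theorem2}, \ref{theorem11} and \ref{theorem3}, now carrying the mixed factor $k_p k_q^2$ through the computation. First I would apply the trinomial revision identity (\ref{trirev}), reduce the $m$-fold sum to an $(m-1)$-fold sum by setting $k_m = n-\sum_{i=1}^{m-1}k_i$, and pass to the integral representation (\ref{binomdef}). The single factor $k_p$ is then absorbed by summing the $w_p$-series with $\sum_k k w^k = w/(1-w)^2$, and the factor $k_q^2$ by summing the $w_q$-series with $\sum_k k^2 w^k = w/(1-w)^2 + 2w^2/(1-w)^3$, as in Theorem \ref{theorem2}.

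Since the summand $w/(1-w)^2$ in the $k_q^2$-series is exactly the one produced by a single factor $k_q$, the product of the $k_p$-contribution with this first summand reproduces verbatim the $k_p k_q$ computation and hence contributes precisely the right-hand side of Theorem \ref{theorem11}. It therefore remains only to evaluate the piece coming from the second summand $2w^2/(1-w)^3$, which after the geometric summations takes the form
\begin{equation}
\begin{split}
 & \res{w_m} \frac{(1+w_m)^{a_m-c_m}}{w_m^{n-c_m-2}} \res{w_p} \frac{(1+w_p)^{a_p-c_p}w_p^{c_p}}{(w_p-w_m)^2} \\
 & \quad \cdot \res{w_q} \frac{2(1+w_q)^{a_q-c_q}w_q^{c_q}}{(w_q-w_m)^3} \mulprod{i=1}{i\neq p, i\neq q}{m-1}(1+w_m)^{a_i-c_i}w_m^{c_i} \\
\end{split}
\end{equation}

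Next I would evaluate the inner residues by (\ref{resdef}): the double pole at $w_p$ is given by (\ref{resquad}), and the triple pole at $w_q$ by the product rule for the second derivative, exactly as in Theorem \ref{theorem2}. Multiplying these two expansions out gives six monomials in $(1+w_m)$ and $w_m$; combining each with the surviving product $\mulprod{i=1}{i\neq p, i\neq q}{m-1}(1+w_m)^{a_i-c_i}w_m^{c_i}$ and with the explicit $w_m$-factor, each monomial has $(1+w_m)$-exponent $A_0-C_0-j$ for some $j\in\{0,1,2,3\}$, and applying $\res{w_m}$ by (\ref{binomdef}) produces a binomial coefficient $\binom{A_0-C_0-j}{n-C_0-j}$. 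I would then use the absorption identity, as in Theorem \ref{theorem2}, to pull out the common factor $\binom{A_0-C_0}{n-C_0}$ and rewrite each term as a falling-factorial ratio over $(A_0-C_0)(A_0-C_0-1)(A_0-C_0-2)$.

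Finally I would add back the Theorem \ref{theorem11} contribution from the first summand and collect. The hard part will be this last step: it is purely algebraic, but one must put the denominator $(A_0-C_0)(A_0-C_0-1)$ of the $k_p k_q$ piece over the common denominator $(A_0-C_0)(A_0-C_0-1)(A_0-C_0-2)$ and then verify that the six triple-pole monomials together with the $k_p k_q$ terms collapse into the grouped coefficients of $a_pa_q^2$, of $c_pa_q^2+a_pa_q-a_pc_q+2a_pa_qc_q$, of $c_pc_q^2$ and of $a_pc_q^2-c_pc_q+c_pa_q+2c_pa_qc_q$ displayed in the theorem. Note that $k_pk_q^2$ is asymmetric in its two roles, so the $(A_0-n)(A_0-n-1)$ block is \emph{not} a clean $p\leftrightarrow q$ image of the $(n-C_0)(n-C_0-1)$ block, and the signs must be tracked individually. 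As in the earlier theorems this bookkeeping is routine but lengthy, and it is where an error would most likely arise.
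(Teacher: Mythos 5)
Your proposal matches the paper's proof essentially step for step: the paper likewise splits the $k_q^2$ geometric series into the $w/(1-w)^2$ piece (whose contribution is exactly Theorem \ref{theorem11}) plus the $2w^2/(1-w)^3$ piece, evaluates the double pole at $w_p$ via (\ref{resquad}) and the triple pole at $w_q$ via the second-derivative expansion, multiplies out, applies the absorption identity, and adds back the Theorem \ref{theorem11} term. Your residue display is in fact cleaner than the paper's (which carries a copy-paste typo writing $a_p-c_p$ and $c_p$ in the $w_q$-residue where $a_q-c_q$ and $c_q$ are meant), so no changes are needed.
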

\begin{proof}
The proof is similar to theorem \ref{theorem11} but with two quadratic and one cubic residues
instead of two quadratic residues.
\begin{equation}
\begin{split}
 & \res{w_m} \frac{(1+w_m)^{a_m-c_m}}{w_m^{n-c_m-2}} \res{w_p} \frac{(1+w_p)^{a_p-c_p}w_p^{c_p}}{(w_p-w_m)^2} \\
 & \cdot  [ \res{w_q} \frac{(1+w_q)^{a_p-c_p}w_q^{c_p}}{(w_q-w_m)^2} + \res{w_q} \frac{2(1+w_q)^{a_p-c_p}w_q^{c_p}}{(w_q-w_m)^3} ]
  \mulprod{i=1}{i\neq p, i\neq q}{m-1} (1+w_m)^{a_i-c_i} w_m^{c_i} \\
\end{split}
\end{equation}
The first product gives a result identical to theorem \ref{theorem11},
and for the second product the residues are given by (\ref{resquad}), and multiplying these out gives:
\begin{equation}
\begin{split}
 & [(a_p-c_p)(1+w_m)^{a_p-c_p-1}w_m^{c_p} + c_p(1+w_m)^{a_p-c_p}w_m^{c_p-1} ] \\
 & \cdot [(a_q-c_q)(a_q-c_q-1)(1+w_m)^{a_q-c_q-2}w_m^{c_q} + 2(a_q-c_q)c_q(1+w_m)^{a_q-c_q-1}w_m^{c_q-1} \\
 & \qquad + c_q(c_q-1)(1+w_m)^{a_q-c_q}w_m^{c_q-2} ] \\
 & = (a_p-c_p)(a_q-c_q)(a_q-c_q-1)(1+w_m)^{a_p-c_p+a_q-c_q-3}w_m^{c_p+c_q} \\
 & + [ c_p(a_q-c_q)(a_q-c_q-1) + 2(a_p-c_p)(a_q-c_q)c_q ] (1+w_m)^{a_p-c_p+a_q-c_q-2} w_m^{c_p+c_q-1} \\
 & + [ (a_p-c_p)c_q(c_q-1) + 2c_p(a_q-c_q)c_q ] (1+w_m)^{a_p-c_p+a_q-c_q-1} w_m^{c_p+c_q-2} \\
 & + c_pc_q(c_q-1)(1+w_m)^{a_p-c_p+a_q-c_q}w_m^{c_p+c_q-3} \\
\end{split}
\end{equation}
Using the absorption identity \cite{GKP94,K15} as in theorem \ref{theorem3},
and adding as mentioned the result of theorem \ref{theorem11}:
\begin{equation}
\begin{split}
 & \binom{A_0-C_0}{n-C_0} \frac{1}{(A_0-C_0)(A_0-C_0-1)(A_0-C_0-2)} \\
 & \cdot \{(n-C_0)(n-C_0-1)(a_q-c_q) [ (n-C_0-2)(a_p-c_p)(a_q-c_q-1) \\
 & \qquad\qquad + (A_0-C_0-2)(c_p(a_q-c_q-1)+2(a_p-c_p)c_q) ] \\
 & \quad + (A_0-C_0-1)(A_0-C_0-2)c_q [ (A_0-C_0)c_p(c_q-1) \\
 & \qquad\qquad + (n-C_0)((a_p-c_p)(c_q-1) + 2c_p(a_q-c_q)) ] \\
 & \quad + (A_0-C_0-2) [ (A_0-n)(A_0-n-1)c_pc_q  \\
 & \qquad\quad + (n-C_0)((n-C_0-1)a_pa_q + (A_0-n)(a_pc_q+c_pa_q)) ] \} \\
\end{split}
\end{equation}
This result simplifies to the theorem.
\end{proof}
\begin{theorem}\label{theorem111}
For integer $m\geq 3$, $1\leq p\leq m$, $1\leq q\leq m$, $1\leq r\leq m$, $p\neq q\neq r$ and $p\neq r$:
\begin{equation}
\begin{split}
 & \multsum \left[\prod_{i=1}^m \binom{a_i}{k_i}\binom{k_i}{c_i}\right] k_p k_q k_r \\ 
 &  = \binom{A_0-C_0}{n-C_0} \left[\prod_{i=1}^m \binom{a_i}{c_i}\right] \frac{1}{(A_0-C_0)(A_0-C_0-1)(A_0-C_0-2)} \\
 & \cdot \{ (n-C_0)(n-C_0-1) [ (n-C_0-2)a_pa_qa_r + (A_0-n)(a_pa_qc_r+a_pc_qa_r+c_pa_qa_r) ] \\
 & + (A_0-n)(A_0-n-1) [ (A_0-n-2)c_pc_qc_r + (n-C_0)(c_pc_qa_r+c_pa_qc_r+a_pc_qc_r) ] \} \\
\end{split}
\end{equation}
\end{theorem}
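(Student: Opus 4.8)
The plan is to follow the pattern established for theorems \ref{theorem11} and \ref{theorem12}, but now with three double poles, one at each of $w_p$, $w_q$, $w_r$, since each of the three indices appears to the first power. Starting from the master residue representation used in the proof of theorem \ref{theorem0}, I would sum the infinite geometric series using $\sum_k w^k = 1/(1-w)$ from (\ref{infgeom0}) for every index outside $\{p,q,r\}$, and the modified series $\sum_k k w^k = w/(1-w)^2$ for each of the three indices $p$, $q$, $r$. After the usual residue manipulation this produces
\begin{equation*}
 \res{w_m} \frac{(1+w_m)^{a_m-c_m}}{w_m^{n-c_m-2}} \res{w_p}\frac{(1+w_p)^{a_p-c_p}w_p^{c_p}}{(w_p-w_m)^2} \res{w_q}\frac{(1+w_q)^{a_q-c_q}w_q^{c_q}}{(w_q-w_m)^2} \res{w_r}\frac{(1+w_r)^{a_r-c_r}w_r^{c_r}}{(w_r-w_m)^2} \mulprod{i=1}{i\neq p,i\neq q,i\neq r}{m-1}(1+w_m)^{a_i-c_i}w_m^{c_i}
\end{equation*}
where the power of $w_m$ in the denominator has dropped by two, rather than by one as in theorem \ref{theorem11}, because three first-power factors each contribute an extra $w_m$. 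As in the earlier theorems I would first prove the statement for $p,q,r<m$ and then invoke the interchangeability of the $k_i$-indices to extend it to all distinct $p,q,r$.

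Each of the three inner residues is already evaluated by (\ref{resquad}), so the next step is to multiply the three resulting two-term expressions together. This gives $2^3=8$ terms, which I would collect according to how many of the three factors contribute their $c$-type term; grouping in this way yields four classes with $(1+w_m)$-powers $\sum_{i=1}^m(a_i-c_i)$ decreased by $0,1,2,3$ respectively. Evaluating the $w_m$ residue with (\ref{binomdef}) then produces the four binomial coefficients
\begin{equation*}
 \binom{A_0-C_0-3}{n-C_0-3},\quad \binom{A_0-C_0-2}{n-C_0-2},\quad \binom{A_0-C_0-1}{n-C_0-1},\quad \binom{A_0-C_0}{n-C_0}
\end{equation*}
multiplying, in order, the fully symmetric coefficient $(a_p-c_p)(a_q-c_q)(a_r-c_r)$, the sum of the three products containing exactly one $c$-factor, the sum of the three products containing exactly two $c$-factors, and $c_pc_qc_r$.

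As in theorems \ref{theorem11} and \ref{theorem12}, I would then apply the absorption identity to factor out $\binom{A_0-C_0}{n-C_0}$, replacing each $\binom{A_0-C_0-k}{n-C_0-k}$ by the falling-factorial ratio and bringing everything over the common denominator $(A_0-C_0)(A_0-C_0-1)(A_0-C_0-2)$. The final step is the algebraic simplification from the $(a_i-c_i)$-form to the stated $a_pa_qa_r$-form of the theorem. I expect this simplification to be the only genuine work: it is the same kind of expansion seen at the close of theorem \ref{theorem12}, but the complete symmetry of the summand in $p$, $q$, $r$ (all three enter linearly) should make it cleaner than the asymmetric $k_pk_q^2$ case. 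Collecting the terms by the factors $(n-C_0)$, $(A_0-n)$ and regrouping should reproduce exactly the $(n-C_0)(n-C_0-1)[\cdots]+(A_0-n)(A_0-n-1)[\cdots]$ structure on the right-hand side.
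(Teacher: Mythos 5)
Your proposal is correct and follows essentially the same route as the paper's own proof: three quadratic residues at $w_p$, $w_q$, $w_r$ each evaluated by (\ref{resquad}), the eight resulting terms grouped by the number of $c$-type factors into four classes giving the binomial coefficients $\binom{A_0-C_0-j}{n-C_0-j}$ for $j=0,1,2,3$, followed by absorption and simplification. The only detail worth noting is that you correctly write the exponents $a_q-c_q$ and $a_r-c_r$ in the $w_q$ and $w_r$ residues, where the paper's displayed formula carries an evident typo repeating $a_p-c_p$.
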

\begin{proof}
The proof is similar to theorem \ref{theorem11} but with three quadratic residues
instead of two.
\begin{equation}
\begin{split}
 & \res{w_m} \frac{(1+w_m)^{a_m-c_m}}{w_m^{n-c_m-2}} \res{w_p} \frac{(1+w_p)^{a_p-c_p}w_p^{c_p}}{(w_p-w_m)^2}
    \res{w_q} \frac{(1+w_q)^{a_p-c_p}w_q^{c_p}}{(w_q-w_m)^2} \\
 & \qquad \cdot \res{w_r} \frac{(1+w_r)^{a_p-c_p}w_r^{c_p}}{(w_r-w_m)^2} 
   \mulprod{i=1}{i\neq p, i\neq q, i\neq r}{m-1} (1+w_m)^{a_i-c_i} w_m^{c_i} \\
\end{split}
\end{equation}
The three last residues are given by (\ref{resquad}), and multiplying these out gives:
\begin{equation}
\begin{split}
 & [(a_p-c_p)(1+w_m)^{a_p-c_p-1}w_m^{c_p} + c_p(1+w_m)^{a_p-c_p}w_m^{c_p-1} ] \\
 & \cdot [(a_q-c_q)(1+w_m)^{a_q-c_q-1}w_m^{c_q} + c_q(1+w_m)^{a_q-c_q}w_m^{c_q-1} ] \\
 & \cdot [(a_r-c_r)(1+w_m)^{a_r-c_r-1}w_m^{c_r} + c_r(1+w_m)^{a_r-c_r}w_m^{c_r-1} ] \\
 & = (a_p-c_p)(a_q-c_q)(a_r-c_r)(1+w_m)^{a_p-c_p+a_q-c_q+a_r-c_r-3}w_m^{c_p+c_q+c_r} \\
 & \quad + [ (a_p-c_p)(a_q-c_q)c_r + (a_p-c_p)c_q(a_r-c_r) + c_p(a_q-c_q)(a_r-c_r) ] \\
 & \qquad \cdot (1+w_m)^{a_p-c_p+a_q-c_q+a_r-c_r-2}w_m^{c_p+c_q+c_r-1} \\
 & \quad + [ (a_p-c_p)c_qc_r + c_p(a_q-c_q)c_r + c_pc_q(a_r-c_r) ] \\
 & \qquad \cdot (1+w_m)^{a_p-c_p+a_q-c_q+a_r-c_r-1}w_m^{c_p+c_q+c_r-2} \\
 & \quad + c_pc_qc_r(1+w_m)^{a_p-c_p+a_q-c_q+a_r-c_r}w_m^{c_p+c_q+c_r-3} \\
\end{split}
\end{equation}
Using the absorption identity \cite{GKP94,K15} as in theorem \ref{theorem12}:
\begin{equation}
\begin{split}
 & \binom{A_0-C_0}{n-C_0} \frac{1}{(A_0-C_0)(A_0-C_0-1)(A_0-C_0-2)} \\
 & \cdot \{ (n-C_0)(n-C_0-1) [(n-C_0-2)(a_p-c_p)(a_q-c_q)(a_r-c_r) + (A_0-C_0-2) \\
 & \qquad\qquad \cdot ((a_p-c_p)(a_q-c_q)c_r+(a_p-c_p)c_q(a_r-c_r)+c_p(a_q-c_q)(a_r-c_r)) ] \\
 & \qquad + (A_0-C_0-1)(A_0-C_0-2) [ (A_0-C_0) c_pc_qc_r \\
 & \qquad\qquad + (n-C_0)((a_p-c_p)c_qc_r+c_p(a_q-c_q)c_r+c_pc_q(a_r-c_r)) ] \} \\
\end{split}
\end{equation}
This result simplifies to the theorem.
\end{proof}
\begin{theorem}
For integer $m\geq 1$:
\begin{equation}
\begin{split}
 & \multsum \left[ \prod_{i=1}^m \binom{a_i}{k_i}\binom{k_i}{c_i} \right] (\sum_{i=1}^m x_i k_i)^3 \\
 & =  \binom{A_0-C_0}{n-C_0} \left[ \prod_{i=1}^m \binom{a_i}{c_i} \right] \frac{1}{(A_0-C_0)(A_0-C_0-1)(A_0-C_0-2)} \\
 & \qquad\cdot \{ (n-C_0)(n-C_0-1) [ (n-C_0-2)A_1^3 \\ 
 & \qquad\qquad\qquad +(A_0-n)(C_3-A_3+3A_1(A_2-C_2+A_1C_1))] \\
 & \qquad\quad +(A_0-n)(A_0-n-1) [ (A_0-n-2)C_1^3 \\
 & \qquad\qquad\qquad + (n-C_0)(A_3-C_3+3C_1(A_2-C_2+A_1C_1))] \} \\
\end{split}
\end{equation}
\end{theorem}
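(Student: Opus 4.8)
The plan is to reduce the cubic sum to the three already-established power-combination theorems, exactly as Theorem~\ref{theoremres2} reduced the bilinear sum to Theorems~\ref{theorem2} and \ref{theorem11}. First I would expand the cube of the linear form by setting $y_i=z_i=x_i$ in (\ref{tripexpand}), which splits the summand into the three admissible index patterns:
\begin{equation}
 (\sum_{i=1}^m x_i k_i)^3 = \tripsum{p}{q}{r} x_p x_q x_r\, k_p k_q k_r + 3 \dblsum{p}{q} x_p x_q^2\, k_p k_q^2 + \sum_{p=1}^m x_p^3\, k_p^3
\end{equation}
Interchanging the order of summation as in (\ref{multsumorder}), the inner multiple sum attached to each pattern is precisely what Theorems~\ref{theorem111}, \ref{theorem12} and \ref{theorem3} evaluate, so I would substitute those closed forms and pull the common prefactor $\binom{A_0-C_0}{n-C_0}\prod_i\binom{a_i}{c_i}(A_0-C_0)^{-1}(A_0-C_0-1)^{-1}(A_0-C_0-2)^{-1}$ out front.

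The heart of the proof is then to collect the three contributions and recognize the symmetric products on the right-hand side. I would group all terms according to the four factor patterns $(n-C_0)(n-C_0-1)(n-C_0-2)$, $(n-C_0)(n-C_0-1)(A_0-n)$, $(A_0-n)(A_0-n-1)(A_0-n-2)$ and $(A_0-n)(A_0-n-1)(n-C_0)$ inherited from the three theorems, and within each group convert the restricted sums into unrestricted ones. Over a full index range the definitions of $A_{p,q}$ and $C_{p,q}$ give the products directly, e.g.\ $\sum_{p,q,r} x_px_qx_r a_pa_qa_r = A_1^3$, $\sum_{p,q,r} x_px_qx_r(a_pa_qc_r+a_pc_qa_r+c_pa_qa_r)=3A_1^2C_1$, $\sum_{p,q} x_px_q^2(a_pa_q-a_pc_q)=A_1A_2-A_1C_2$, and $\sum_p x_p^3(c_p-a_p)=C_3-A_3$; the $c$-weighted analogues produce $C_1^3$, $3A_1C_1^2$, $A_2C_1-C_1C_2$ and $A_3-C_3$. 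These are exactly the pieces of $A_1^3$, $C_3-A_3+3A_1(A_2-C_2+A_1C_1)$, $C_1^3$ and $A_3-C_3+3C_1(A_2-C_2+A_1C_1)$ that appear in the statement.

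The step I expect to be the main obstacle is the bookkeeping of the coincidence corrections generated when the restricted sums are promoted to full sums. Passing from $\tripsum{p}{q}{r}$ to $\sum_{p}\sum_q\sum_r$ introduces diagonal terms where two or three indices collide, and these must cancel against the genuinely higher-degree diagonal pieces carried by Theorems~\ref{theorem12} and \ref{theorem3}; for instance the $c_pa_q^2+2a_pa_qc_q$ part of Theorem~\ref{theorem12} is manufactured to match the $q=r$ collision of the symmetric term $a_pa_qc_r+a_pc_qa_r+c_pa_qa_r$. Because this unfolding is nested (triple sums reduce to double sums, which in turn reduce to single sums), I would carry it out one coincidence level at a time, checking at each level that the spurious diagonal contributions telescope away and only the clean products survive; the explicit factor $3$ on the $k_pk_q^2$ term is what fixes the coefficient of the mixed products such as $3A_1A_2$ and $3A_1^2C_1$. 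This is the computation summarised as ``simplifies to the theorem'' in the earlier proofs, and I expect it to be lengthy but entirely mechanical.
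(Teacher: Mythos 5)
Your proposal is correct and follows essentially the same route as the paper: expand the cube via (\ref{tripexpand}) with $y_i=z_i=x_i$, interchange summation as in (\ref{multsumorder}), substitute Theorems \ref{theorem111}, \ref{theorem12} and \ref{theorem3}, and then recombine the restricted sums into the symmetric products $A_1^3$, $A_1^2C_1$, $A_1C_1^2$, $C_1^3$ with the leftover diagonal pieces collapsing into $A_2$, $C_2$, $A_3$, $C_3$. The paper performs your ``coincidence correction'' bookkeeping by applying (\ref{tripexpand}) in reverse to the products, which is the same computation in different packaging.
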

\begin{proof}
The product of the sum in the summand is expressed in sums with unequal indices:
\begin{equation}
 (\sum_{i=1}^m x_ik_i)^3 = \tripsum{p}{q}{r} x_px_qx_r k_pk_qk_r
 + 3\dblsum{p}{q} x_px_q^2 k_pk_q^2 + \sum_{p=1}^m x_p^3 k_p^3
\end{equation}
Changing the order of summation, the multiple sum over the products of $k_p$, $k_q$ and $k_r$ is
substituted from theorems \ref{theorem111}, \ref{theorem12} and \ref{theorem3}:
\begin{equation}
\begin{split}
 & \tripsum{i}{j}{k} x_ix_jx_k \{ (n-C_0)(n-C_0-1) [ (n-C_0-2)a_ia_ja_k + 3(A_0-n)a_ia_jc_k ] \\
 & \qquad\qquad + (A_0-n)(A_0-n-1) [ (A_0-n-2)c_ic_jc_k + 3(n-C_0)c_ic_ja_k ] \} \\
 & + 3 \dblsum{i}{j} x_ix_j^2 \{ (n-C_0)(n-C_0-1) [ (n-C_0-2)a_ia_j^2 \\ 
 & \qquad\qquad\qquad\qquad + (A_0-n)(c_ia_j^2+a_ia_j-a_ic_j+2a_ia_jc_j) ] \\
 & \qquad\qquad + (A_0-n)(A_0-n-1) [ (A_0-n-2)c_ic_j^2 \\
 & \qquad\qquad\qquad\qquad + (n-C_0)(a_ic_j^2-c_ic_j+c_ia_j+2c_ia_jc_j) ] \} \\
 & + \sum_{i=1}^m x_i^3 \{ (n-C_0)(n-C_0-1) [ (n-C_0-2)a_i^3 + (A_0-n)(3a_i^2c_i+3a_i^2-3a_ic_i-a_i+c_i) ] \\
 & \qquad + (A_0-n)(A_0-n-1) [ (A_0-n-2)c_i^3 + (n-C_0)(3a_ic_i^2-3c_i^2+3a_ic_i+a_i-c_i) ] \} \\
\end{split}
\end{equation}
The expresions of $A_1^3$, $C_1^3$, $A_1^2C_1$ and $A_1C_1^2$ as sums over
unequal indices are given by equation (\ref{tripexpand}):
\begin{equation}
\begin{split}
 & (n-C_0)(n-C_0-1) [ (n-C_0-2)A_1^3 +3(A_0-n)A_1^2C_1 ] \\
 & + (A_0-n)(A_0-n-1) [ (A_0-n-2)C_1^3 + 3(n-C_0)A_1C_1^2 ] \\
 & + 3 \dblsum{i}{j} x_ix_j^2 [ (n-C_0)(n-C_0-1)(A_0-n)(a_ia_j-a_ic_j) \\
 & \qquad \qquad + (A_0-n)(A_0-n-1)(n-C_0)(c_ia_j-c_ic_j) ] \\
 & + \sum_{i=1}^m x_i^3 [ (n-C_0)(n-C_0-1)(A_0-n)(3a_i^2-3a_ic_i-a_i+c_i) \\
 & \qquad\qquad + (A_0-n)(A_0-n-1)(n-C_0)(-3c_i^2+3a_ic_i+a_i-c_i) ] \\
\end{split}
\end{equation}
The remaining sums can be expressed in $A_1$, $C_1$, $A_2$, $C_2$, $A_3$ and $C_3$:
\begin{equation}
\begin{split}
 & (n-C_0)(n-C_0-1) [ (n-C_0-2)A_1^3 +3(A_0-n)A_1^2C_1 ] \\
 & + (A_0-n)(A_0-n-1) [ (A_0-n-2)C_1^3 + 3(n-C_0)A_1C_1^2 ] \\
 & + 3(n-C_0)(A_0-n) [ (n-C_0-1)(A_1A_2-A_1C_2) \\
 & \qquad \qquad + (A_0-n-1)(C_1A_2-C_1C_2)] \\
 & + (n-C_0)(A_0-n) [ (n-C_0-1) - (A_0-n-1) ] ( C_3-A_3 ) \\
\end{split}
\end{equation}
This result simplifies to the theorem.
\end{proof}
\begin{theorem}
For integer $m\geq 1$:
\begin{equation}
\begin{split}
 & \multsum \left[ \prod_{i=1}^m \binom{a_i}{k_i}\binom{k_i}{c_i} \right] \sum_{i=1}^m x_i k_i^3 \\
 & =  \binom{A_0-C_0}{n-C_0} \left[ \prod_{i=1}^m \binom{a_i}{c_i} \right] \frac{1}{(A_0-C_0)(A_0-C_0-1)(A_0-C_0-2)} \\
 & \qquad\cdot \{ (n-C_0)(n-C_0-1) [ (n-C_0-2)A_{1,3} \\ 
 & \qquad\qquad\qquad +(A_0-n)(C_1-A_1+3(S_{2,1}+A_{1,2}-S_{1,1}))] \\
 & \qquad\quad +(A_0-n)(A_0-n-1) [ (A_0-n-2)C_{1,3} \\
 & \qquad\qquad\qquad + (n-C_0)(A_1-C_1+3(S_{1,2}-C_{1,2}+S_{1,1}))] \} \\
\end{split}
\end{equation}
\end{theorem}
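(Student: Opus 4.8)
The plan is to proceed exactly as in the proof of the theorem for $\sum_{i=1}^m x_ik_i^2$, with the square replaced by a cube. Since the summand contains only the single power $k_i^3$ rather than a product of three distinct linear sums, no expansion into mutually unequal indices (as in (\ref{tripexpand})) is needed here. Changing the order of summation as in (\ref{multsumorder}) directly yields
\begin{equation}
 \sum_{p=1}^m x_p \multsum\left[\prod_{i=1}^m\binom{a_i}{k_i}\binom{k_i}{c_i}\right]k_p^3,
\end{equation}
and the inner multiple sum is supplied verbatim by theorem \ref{theorem3}.

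Next I would substitute the closed form of theorem \ref{theorem3}, factor out the common quantity $\binom{A_0-C_0}{n-C_0}\left[\prod_{i=1}^m\binom{a_i}{c_i}\right]$ divided by $(A_0-C_0)(A_0-C_0-1)(A_0-C_0-2)$, and distribute $x_p$ through the polynomial in $a_p$ and $c_p$. Grouping by the four scalar prefactors $(n-C_0)(n-C_0-1)(n-C_0-2)$, $(n-C_0)(n-C_0-1)(A_0-n)$, $(A_0-n)(A_0-n-1)(A_0-n-2)$ and $(A_0-n)(A_0-n-1)(n-C_0)$ reduces everything to identifying the elementary sums obtained when $x_p$ multiplies each monomial in $a_p,c_p$.

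The remaining step is purely the bookkeeping of the Section~1 definitions: $\sum_p x_p a_p^3=A_{1,3}$, $\sum_p x_p c_p^3=C_{1,3}$, $\sum_p x_p a_p^2c_p=S_{2,1}$, $\sum_p x_p a_pc_p^2=S_{1,2}$, $\sum_p x_p a_p^2=A_{1,2}$, $\sum_p x_p c_p^2=C_{1,2}$, $\sum_p x_p a_pc_p=S_{1,1}$, $\sum_p x_p a_p=A_1$ and $\sum_p x_p c_p=C_1$. With these substitutions the coefficient of $(n-C_0)(n-C_0-1)(A_0-n)$ collapses to $C_1-A_1+3(S_{2,1}+A_{1,2}-S_{1,1})$ and that of $(A_0-n)(A_0-n-1)(n-C_0)$ to $A_1-C_1+3(S_{1,2}-C_{1,2}+S_{1,1})$, giving the stated result. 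There is no genuine obstacle; the only point demanding care is pairing the mixed monomials $a_p^2c_p$ and $a_pc_p^2$ with the correct two-index sums $S_{2,1}$ and $S_{1,2}$, so that the subscript order is not transposed.
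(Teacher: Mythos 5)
Your proposal is correct and follows exactly the paper's own argument: change the order of summation as in (\ref{multsumorder}), substitute the closed form of theorem \ref{theorem3} for the inner sum over $k_p^3$, and translate the resulting monomial sums into $A_{1,3}$, $C_{1,3}$, $S_{2,1}$, $S_{1,2}$, $A_{1,2}$, $C_{1,2}$, $S_{1,1}$, $A_1$ and $C_1$. Your identifications of the mixed monomials with $S_{2,1}$ and $S_{1,2}$ match the paper's definition $S_{p,q}=\sum_i x_i a_i^p c_i^q$, so nothing is missing.
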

\begin{proof}
Changing the order of summation as in (\ref{multsumorder}) and using theorem \ref{theorem3}.
\begin{equation}
\begin{split}
 & \sum_{i=1}^m x_i \{ (n-C_0)(n-C_0-1) [ (n-C_0-2)a_i^3 + (A_0-n)(3a_i^2c_i+3a_i^2-3a_ic_i-a_i+c_i) ] \\
 & \qquad + (A_0-n)(A_0-n-1) [ (A_0-n-2)c_i^3 + (n-C_0)(3a_ic_i^2-3c_i^2+3a_ic_i+a_i-c_i) ] \} \\
 & = (n-C_0)(n-C_0-1) [ (n-C_0-2)A_{1,3} +( A_0-n)(3S_{2,1}+3A_{1,2}-3S_{1,1}-A_1+C_1)] \\ 
 & +(A_0-n)(A_0-n-1) [ (A_0-n-2)C_{1,3}+ (n-C_0)(3S_{1,2}-3C_{1,2}+3S_{1,1}+A_1-C_1)]  \\
\end{split}
\end{equation}
which results in the theorem.
\end{proof}

\section{Proof of the Unrestricted Multi-sum Identities}

\begin{theorem}\label{simple0}
For integer $m\geq 1$:
\begin{equation}
 \multsumfree \prod_{i=1}^m \binom{a_i}{k_i}\binom{k_i}{c_i} = 2^{A_0-C_0} \prod_{i=1}^m \binom{a_i}{c_i}
\end{equation}
\end{theorem}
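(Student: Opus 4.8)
The plan is to exploit the fact that, unlike in Theorem~\ref{theorem0}, the unrestricted multi-sum carries no constraint linking the indices $k_i$, so it factorizes completely into a product of one-dimensional sums:
\begin{equation}
 \multsumfree \prod_{i=1}^m \binom{a_i}{k_i}\binom{k_i}{c_i}
  = \prod_{i=1}^m \sum_{k_i=0}^{a_i} \binom{a_i}{k_i}\binom{k_i}{c_i}.
\end{equation}
This reduces the whole problem to evaluating a single factor, which is where all the work (such as it is) will lie.

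For each factor I would apply the trinomial revision identity (\ref{trirev}), writing $\binom{a_i}{k_i}\binom{k_i}{c_i} = \binom{a_i-c_i}{k_i-c_i}\binom{a_i}{c_i}$. Since this vanishes unless $c_i \leq k_i \leq a_i$, the summation range may be shifted by the substitution $j_i = k_i - c_i$, giving
\begin{equation}
 \sum_{k_i=0}^{a_i} \binom{a_i}{k_i}\binom{k_i}{c_i}
  = \binom{a_i}{c_i} \sum_{j_i=0}^{a_i-c_i} \binom{a_i-c_i}{j_i}
  = \binom{a_i}{c_i}\, 2^{a_i-c_i},
\end{equation}
where the final step is the elementary row-sum $\sum_{j=0}^{N}\binom{N}{j}=2^N$.

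Taking the product over $i$ and collecting exponents then yields $\prod_{i=1}^m \binom{a_i}{c_i}\,2^{\sum_{i=1}^m(a_i-c_i)}$, and since $A_0=\sum_{i=1}^m a_i$ and $C_0=\sum_{i=1}^m c_i$ by definition, the exponent equals $A_0-C_0$, which is exactly the claimed right-hand side. There is essentially no hard step here: the entire content is the initial factorization, which removes the combinatorial coupling that forced the residue machinery in the constrained theorems. The only point needing a word of care is the justification that $\binom{a_i-c_i}{k_i-c_i}$ is zero for $k_i<c_i$, so that the index shift neither introduces nor loses terms; this is the same vanishing already invoked in the proof of Theorem~\ref{theorem0}.
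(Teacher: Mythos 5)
Your proof is correct, and it follows the paper's own reduction exactly in its first two steps: the unrestricted multi-sum factorizes into a product of one-dimensional sums, and trinomial revision (\ref{trirev}) pulls out the factor $\binom{a_i}{c_i}$. Where you diverge is in the evaluation of the remaining single sum $\sum_{k=0}^{a}\binom{a-c}{k-c}$: you shift the index and invoke the elementary row-sum $\sum_{j=0}^{N}\binom{N}{j}=2^{N}$, whereas the paper stays within its integral-representation framework, writing $\binom{a-c}{k-c}=\res{w}\,(1+w)^{a-c}/w^{k-c+1}$, summing the geometric series in $1/w$, and evaluating $\res{w}\,(1+w)^{a-c}w^{c}/(w-1)=2^{a-c}$. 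The two evaluations are of course equivalent, and yours is the more elementary and self-contained; the paper's choice buys uniformity, since the same residue template extends directly to the weighted variants with factors $k_p$, $k_p^2$, $k_p^3$ in Theorems \ref{simple1}, \ref{simple2} and \ref{simple3}, where a bare row-sum no longer suffices. Your remark about the vanishing of $\binom{a_i-c_i}{k_i-c_i}$ for $k_i<c_i$ (and for $k_i>a_i$) correctly handles the index shift, matching the justification the paper gives for replacing the upper bound by $\infty$.
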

\begin{proof}
The multiple sum of the product on the left side is a product of sums:
\begin{equation}
 \multsumfree \prod_{i=1}^m \binom{a_i}{k_i}\binom{k_i}{c_i} = \prod_{i=1}^m \sum_{k_i=0}^{a_i} \binom{a_i}{k_i}\binom{k_i}{c_i}
\end{equation}
The trinomial revision identity \cite{GKP94,K15} is applied as in theorem \ref{theorem0}:
\begin{equation}
 \binom{a_i}{k_i}\binom{k_i}{c_i} = \binom{a_i-c_i}{k_i-c_i}\binom{a_i}{c_i}
\end{equation}
Because this expression is zero when $c_i>a_i$,
in the individual sums it can be assumed that $0\leq c_i\leq a_i$.
Then each sum can be evaluated as follows, where the upper bound $a$ can be
replaced by $\infty$ because the summand is zero when $k>a$:
\begin{equation}
\begin{split}
 & \sum_{k=0}^a \binom{a-c}{k-c} = \sum_{k=0}^{\infty} \res{w} \frac{(1+w)^{a-c}}{w^{k-c+1}}
 = \res{w} (1+w)^{a-c}w^{c-1} \sum_{k=0}^{\infty} (\frac{1}{w})^k \\
 & = \res{w} \frac{(1+w)^{a-c}w^{c-1}}{1-1/w} = \res{w} \frac{(1+w)^{a-c}w^c}{w-1} = 2^{a-c} \\
\end{split}
\end{equation}
The product of these individual sums gives the theorem.
\end{proof}
\begin{theorem}\label{simple1}
For integer $m\geq 1$ and $1\leq p\leq m$:
\begin{equation}
 \multsumfree \left[ \prod_{i=1}^m \binom{a_i}{k_i}\binom{k_i}{c_i} \right] k_p 
 = 2^{A_0-C_0-1} \left[ \prod_{i=1}^m \binom{a_i}{c_i} \right] (a_p+c_p)
\end{equation}
\end{theorem}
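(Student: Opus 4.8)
The plan is to reuse the factorization that made Theorem \ref{simple0} work. Since the sum is unrestricted, the indices $k_i$ are independent, and the extra weight $k_p$ depends on the single index $p$ only; hence the whole expression splits as a product of one-dimensional sums,
\begin{equation*}
 \multsumfree \left[ \prod_{i=1}^m \binom{a_i}{k_i}\binom{k_i}{c_i} \right] k_p
  = \left[ \mulprod{i=1}{i\neq p}{m} \sum_{k_i=0}^{a_i} \binom{a_i}{k_i}\binom{k_i}{c_i} \right]
    \left[ \sum_{k_p=0}^{a_p} \binom{a_p}{k_p}\binom{k_p}{c_p}\, k_p \right] .
\end{equation*}
Each of the $m-1$ factors with $i\neq p$ was already computed inside the proof of Theorem \ref{simple0}, where it was shown that $\sum_{k_i=0}^{a_i}\binom{a_i}{k_i}\binom{k_i}{c_i}=2^{a_i-c_i}\binom{a_i}{c_i}$. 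So the only new work is the single sum carrying the factor $k_p$.

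First I would apply the trinomial revision identity (\ref{trirev}) to pull out $\binom{a_p}{c_p}$, replace the upper bound $a_p$ by $\infty$ (the summand vanishes for $k_p>a_p$), and pass to the integral representation (\ref{binomdef}) exactly as in Theorem \ref{simple0}. The difference is that the geometric series must now carry a factor $k_p$, so I would use the modified series $\sum_{k=0}^{\infty} k w^k = w/(1-w)^2$ already employed in Theorem \ref{theorem1}. This collapses the sum to a single residue with a double pole at $w=1$,
\begin{equation*}
 \sum_{k_p=0}^{a_p} \binom{a_p-c_p}{k_p-c_p}\, k_p
  = \res{w} \frac{(1+w)^{a_p-c_p}\, w^{c_p}}{(w-1)^2} .
\end{equation*}
Evaluating this with (\ref{resdef}) and the product rule for the first derivative gives $(a_p-c_p)2^{a_p-c_p-1}+c_p\,2^{a_p-c_p}=2^{a_p-c_p-1}(a_p+c_p)$, so the $p$-th sum equals $2^{a_p-c_p-1}(a_p+c_p)\binom{a_p}{c_p}$.

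Finally I would reassemble the product. Multiplying the $m-1$ factors from Theorem \ref{simple0} by the $p$-th factor collects all binomials into $\prod_{i=1}^m\binom{a_i}{c_i}$ and all powers of two into $2^{\sum_{i=1}^m(a_i-c_i)-1}=2^{A_0-C_0-1}$, leaving the weight $(a_p+c_p)$, which is exactly the claimed right-hand side. As in Theorem \ref{theorem1}, since the indices $k_i$ in the left side may be freely interchanged, proving the identity for one fixed $p$ establishes it for all $1\le p\le m$.

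The computation is entirely routine; the only points deserving care are confirming that the $k_p$-weight attaches to a single one-dimensional sum so that the product structure survives, and tracking the exponent bookkeeping so the powers of two combine to $A_0-C_0-1$. It is worth noting the contrast with the restricted Theorem \ref{theorem1}: there the double pole arose from $1/(w_p-w_m)^2$ and produced the asymmetric weight $(n-C_0)a_p+(A_0-n)c_p$, whereas here the double pole comes from the geometric series at $w=1$ and the weight simplifies to the symmetric $(a_p+c_p)$.
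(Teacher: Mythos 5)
Your proposal is correct and follows essentially the same route as the paper: factor the unrestricted multiple sum into independent one-dimensional sums, reuse the $2^{a_i-c_i}\binom{a_i}{c_i}$ evaluation from Theorem \ref{simple0} for the $i\neq p$ factors, and evaluate the $p$-th sum via trinomial revision, the modified geometric series $\sum_k k w^k = w/(1-w)^2$, and the double-pole residue $\res{w}\frac{(1+w)^{a_p-c_p}w^{c_p}}{(w-1)^2}=2^{a_p-c_p-1}(a_p+c_p)$. The only cosmetic difference is that you spell out the product factorization and the symmetry-in-$p$ remark explicitly, which the paper leaves implicit.
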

\begin{proof}
The following modified infinite geometric series \cite{K19} is used:
\begin{equation}
 \sum_{k=0}^{\infty} k w^k = \frac{w}{(1-w)^2}
\end{equation}
One of the individual sums of the product becomes:
\begin{equation}
\begin{split}
 & \sum_{k=0}^a \binom{a-c}{k-c} k = \sum_{k=0}^a k \res{w} \frac{(1+w)^{a-c}}{w^{k-c+1}}
 = \res{w} (1+w)^{a-c}w^{c-1} \sum_{k=0}^{\infty} k (\frac{1}{w})^k \\
 & = \res{w} \frac{(1+w)^{a-c}w^{c-1}}{w(1-1/w)^2} = \res{w} \frac{(1+w)^{a-c}w^c}{(w-1)^2} = (a-c)2^{a-c-1}+c 2^{a-c} \\
 & = 2^{a-c-1}(a-c+2c) = 2^{a-c-1}(a+c) \\
\end{split}
\end{equation}
The other individual sums of the product remain identical,
so the product becomes the theorem.
\end{proof}
\begin{theorem}
For integer $m\geq 1$:
\begin{equation}
 \multsumfree \left[ \prod_{i=1}^m \binom{a_i}{k_i}\binom{k_i}{c_i} \right] \sum_{i=1}^m x_i k_i
  = 2^{A_0-C_0-1} \left[ \prod_{i=1}^m \binom{a_i}{c_i} \right] (A_1+C_1)
\end{equation}
\end{theorem}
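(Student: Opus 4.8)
The plan is to follow exactly the strategy used in Theorem \ref{theoremres1}, but with the restricted inner evaluation replaced by its unrestricted counterpart from Theorem \ref{simple1}. Because the summand factor $\sum_{i=1}^m x_i k_i$ is a single linear sum rather than a product of sums, no expansion into sums over mutually unequal indices (as in the quadratic and cubic cases) is needed, so the argument is short and purely mechanical.

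First I would interchange the order of summation, pulling each coefficient $x_i$ outside the multiple sum, rewriting the left side as
\[
 \sum_{i=1}^m x_i \multsumfree \left[ \prod_{j=1}^m \binom{a_j}{k_j}\binom{k_j}{c_j} \right] k_i .
\]
Then for each fixed index $i$ I would invoke Theorem \ref{simple1} with $p=i$, which evaluates the inner unrestricted multi-sum as $2^{A_0-C_0-1}\left[\prod_{j=1}^m\binom{a_j}{c_j}\right](a_i+c_i)$. Since the prefactor $2^{A_0-C_0-1}\prod_j\binom{a_j}{c_j}$ does not depend on $i$, I can factor it out, obtaining
\[
 2^{A_0-C_0-1} \left[ \prod_{j=1}^m \binom{a_j}{c_j} \right] \sum_{i=1}^m x_i (a_i + c_i).
\]

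Finally I would split the remaining sum using the definitions $A_1 = \sum_{i=1}^m x_i a_i$ and $C_1 = \sum_{i=1}^m x_i c_i$, so that $\sum_{i=1}^m x_i(a_i+c_i) = A_1 + C_1$, which produces the claimed right side. I do not expect any genuine obstacle here; the only point requiring a moment's attention is that Theorem \ref{simple1} must hold uniformly for every $1\leq i\leq m$, which it does by the index-interchange remark in its statement, so both the reordering and the substitution are fully justified.
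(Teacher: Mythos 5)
Your proposal is correct and matches the paper's own proof essentially verbatim: the paper likewise changes the order of summation, applies Theorem \ref{simple1} to each inner multi-sum, and concludes via $\sum_{i=1}^m x_i(a_i+c_i)=A_1+C_1$. The only trivial imprecision is attributing the validity of Theorem \ref{simple1} for all $1\leq p\leq m$ to an index-interchange remark; in fact its statement already covers all $p$ directly, so nothing is affected.
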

\begin{proof}
Changing the order of summation and applying theorem \ref{simple1}: 
\begin{equation}
 \sum_{i=1}^m x_i (a_i+c_i) = A_1 + C_1
\end{equation}
\end{proof}
\begin{theorem}\label{simple2}
For integer $m\geq 1$ and $1\leq p\leq m$:
\begin{equation}
 \multsumfree \left[ \prod_{i=1}^m \binom{a_i}{k_i}\binom{k_i}{c_i} \right] k_p^2
 = 2^{A_0-C_0-2} \left[ \prod_{i=1}^m \binom{a_i}{c_i} \right] [(a_p+c_p)^2+a_p-c_p]
\end{equation}
\end{theorem}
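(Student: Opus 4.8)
The plan is to exploit the factorization that makes the unrestricted case so much easier than the restricted one: since there is no coupling constraint among the $k_i$, the whole multiple sum splits into a product of single sums, exactly as in Theorems~\ref{simple0} and \ref{simple1}. Concretely I would write
\[
 \multsumfree \left[\prod_{i=1}^m \binom{a_i}{k_i}\binom{k_i}{c_i}\right]k_p^2
 = \left[\prod_{\substack{i=1\\ i\neq p}}^m \sum_{k_i=0}^{a_i}\binom{a_i}{k_i}\binom{k_i}{c_i}\right]
   \sum_{k_p=0}^{a_p}\binom{a_p}{k_p}\binom{k_p}{c_p}k_p^2 .
\]
Every factor with $i\neq p$ is already known from Theorem~\ref{simple0} to equal $2^{a_i-c_i}\binom{a_i}{c_i}$, so the entire problem reduces to evaluating the one distinguished sum carrying the weight $k_p^2$.

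For that single sum I would proceed exactly as in Theorem~\ref{simple1}. First apply the trinomial revision identity (\ref{trirev}) to pull out $\binom{a_p}{c_p}$ and reduce to $\sum_{k=0}^{a}\binom{a-c}{k-c}k^2$ (writing $a=a_p$, $c=c_p$, and assuming $0\le c\le a$ since the summand vanishes otherwise). Then insert the integral representation (\ref{binomdef}), replace the upper bound $a$ by $\infty$, and sum the resulting series using the modified geometric series
\[
 \sum_{k=0}^\infty k^2 w^k = \frac{w}{(1-w)^2}+\frac{2w^2}{(1-w)^3}
\]
already used in Theorem~\ref{theorem2}. After the substitution $w\mapsto 1/w$ this produces two residues at $w=1$, namely $\res{w}\dfrac{(1+w)^{a-c}w^{c}}{(w-1)^2}$ and $\res{w}\dfrac{2(1+w)^{a-c}w^{c}}{(w-1)^3}$, which I would evaluate by (\ref{resdef}) as the first and second derivatives of $(1+w)^{a-c}w^{c}$ at $w=1$, via the product rule.

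The first residue reproduces the computation of Theorem~\ref{simple1} and gives $2^{a-c-1}(a+c)$; the second gives $(a-c)(a-c-1)2^{a-c-2}+2c(a-c)2^{a-c-1}+c(c-1)2^{a-c}$. The only real work, and the step I expect to be the main obstacle, is the bookkeeping that collapses the sum of these two contributions: after factoring out $2^{a-c-2}$ one is left with the bracket $(a-c)(a-c-1)+4c(a-c)+4c(c-1)+2(a+c)$, and checking that $(a-c)^2+4c(a-c)+4c^2=(a+c)^2$ together with $-(a-c)-4c+2a+2c=a-c$ shows this bracket equals $(a+c)^2+a-c$. Multiplying the resulting single-sum value $2^{a-c-2}\binom{a_p}{c_p}[(a_p+c_p)^2+a_p-c_p]$ by the factors $\prod_{i\neq p}2^{a_i-c_i}\binom{a_i}{c_i}$ and using $A_0-C_0=\sum_i(a_i-c_i)$ then yields the stated identity.
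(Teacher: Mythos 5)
Your proposal is correct and follows essentially the same route as the paper: factoring the unrestricted multiple sum into a product of single sums, reducing the distinguished factor via trinomial revision and the integral representation, summing with $\sum_{k\ge 0}k^2w^k = \frac{w}{(1-w)^2}+\frac{2w^2}{(1-w)^3}$, and evaluating the two residues at $w=1$ to get $2^{a-c-2}[(a+c)^2+a-c]$. The algebraic bookkeeping you describe matches the paper's computation exactly.
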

\begin{proof}
The following modified infinite geometric series \cite{K19} is used:
\begin{equation}
 \sum_{k=0}^{\infty} k^2 w^k = \frac{w}{(1-w)^2} + \frac{2w^2}{(1-w)^3}
\end{equation}
One of the individual sums of the product becomes:
\begin{equation}
\begin{split}
 & \sum_{k=0}^a \binom{a-c}{k-c} k^2 = \res{w} (1+w)^{a-c}w^{c-1} \sum_{k=0}^{\infty} k^2 (\frac{1}{w})^k \\
 & = \res{w} (1+w)^{a-c}w^{c-1} [\frac{1}{w(1-1/w)^2} + \frac{2}{w^2(1-1/w)^3} ] \\
 & = \res{w} (1+w)^{a-c}w^c [ \frac{1}{(w-1)^2} + \frac{2}{(w-1)^3} ] \\
 & = 2^{a-c-1}(a+c) + (a-c)(a-c-1)2^{a-c-2} + 2(a-c)c 2^{a-c-1} + c(c-1)2^{a-c} \\ 
 & = 2^{a-c-2} [ 2(a+c)+(a-c)(a-c-1)+4(a-c)c+4c(c-1) ] \\
 & = 2^{a-c-2} [ (a+c)^2 + a-c ] \\ 
\end{split}
\end{equation}
The other individual sums of the product remain identical,
so the product becomes the theorem.
\end{proof}
\begin{theorem}\label{simple11}
For integer $m\geq 1$, $1\leq p\leq m$, $1\leq q\leq m$ and $p\neq q$:
\begin{equation}
 \multsumfree \left[ \prod_{i=1}^m \binom{a_i}{k_i}\binom{k_i}{c_i} \right] k_p k_q
 = 2^{A_0-C_0-2} \left[ \prod_{i=1}^m \binom{a_i}{c_i} \right] (a_p+c_p)(a_q+c_q)
\end{equation}
\end{theorem}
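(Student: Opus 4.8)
The plan is to exploit the absence of the index restriction, which, exactly as in Theorems \ref{simple0}, \ref{simple1} and \ref{simple2}, lets the multiple sum of the product factor into a product of independent single-index sums. The decisive feature is the hypothesis $p\neq q$: it forces the two linear weights $k_p$ and $k_q$ to attach to \emph{distinct} summation variables, so no single sum ever acquires a squared index, and every factor that arises is one whose value has already been computed in an earlier theorem.

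Concretely, I would first use the distributive law for the free $m$-fold sum to write
\begin{equation}
\begin{split}
 & \multsumfree \left[ \prod_{i=1}^m \binom{a_i}{k_i}\binom{k_i}{c_i} \right] k_p k_q \\
 & = \mulprod{i=1}{i\neq p, i\neq q}{m} \left[ \sum_{k_i=0}^{a_i} \binom{a_i}{k_i}\binom{k_i}{c_i} \right]
   \left[ \sum_{k_p=0}^{a_p} \binom{a_p}{k_p}\binom{k_p}{c_p} k_p \right]
   \left[ \sum_{k_q=0}^{a_q} \binom{a_q}{k_q}\binom{k_q}{c_q} k_q \right] \\
\end{split}
\end{equation}
It is precisely at this splitting that $p\neq q$ is invoked. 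I would then substitute the single-sum evaluations already obtained: from the proof of Theorem \ref{simple0}, each of the $m-2$ plain factors equals $\binom{a_i}{c_i}\, 2^{a_i-c_i}$, while from the proof of Theorem \ref{simple1} the two linear factors equal $\binom{a_p}{c_p}\, 2^{a_p-c_p-1}(a_p+c_p)$ and $\binom{a_q}{c_q}\, 2^{a_q-c_q-1}(a_q+c_q)$. Multiplying the three pieces and collecting the prefactors yields
\begin{equation}
 \left[ \prod_{i=1}^m \binom{a_i}{c_i} \right] 2^{\sum_{i=1}^m (a_i-c_i) - 2}\, (a_p+c_p)(a_q+c_q),
\end{equation}
and since $\sum_{i=1}^m (a_i-c_i) = A_0 - C_0$ by the definitions of $A_0$ and $C_0$, this is exactly the claimed right-hand side.

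The main obstacle is bookkeeping rather than substance: the only points needing care are that exactly two of the $m$ factors, those indexed by $p$ and $q$, contribute the exponent $a_i-c_i-1$ instead of $a_i-c_i$, so the total power of $2$ drops by $2$ and produces the $2^{A_0-C_0-2}$ prefactor, and that the $(a_i+c_i)$-type factors appear only for $i\in\{p,q\}$. No new residue computation is required; the result follows entirely by combining the factorization with the earlier single-index evaluations.
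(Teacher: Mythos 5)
Your proposal is correct and is essentially the paper's own argument: the paper proves this theorem in one line by ``applying Theorem \ref{simple1} twice,'' which is precisely the factorization of the unrestricted sum into independent single-index sums that you carry out explicitly, with the $p$ and $q$ factors each contributing $\binom{a_i}{c_i}2^{a_i-c_i-1}(a_i+c_i)$ and the rest contributing $\binom{a_i}{c_i}2^{a_i-c_i}$. Your version merely makes the bookkeeping explicit; no difference in substance.
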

\begin{proof}
This theorem follows from applying theorem \ref{simple1} twice.
\end{proof}
\begin{theorem}
For integer $m\geq 1$:
\begin{equation}
\begin{split}
 & \multsumfree \left[ \prod_{i=1}^m \binom{a_i}{k_i}\binom{k_i}{c_i} \right] (\sum_{i=1}^m x_i k_i)(\sum_{i=1}^m y_i k_i) \\
 & = 2^{A_0-C_0-2} \left[ \prod_{i=1}^m \binom{a_i}{c_i} \right] [ A_{1,1}^*-C_{1,1}^* + (A_1+C_1)(A_1^*+C_1^*) ] \\
\end{split}
\end{equation}
\end{theorem}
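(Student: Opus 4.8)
The plan is to mimic the proof of Theorem~\ref{theoremres2}, its restricted counterpart, but with the unrestricted lemmas in place of their restricted analogues. First I would split the product of the two linear sums in the summand into a distinct-index part plus a diagonal part,
\begin{equation}
 (\sum_{i=1}^m x_i k_i)(\sum_{i=1}^m y_i k_i) = \dblsum{i}{j} x_i y_j k_i k_j + \sum_{i=1}^m x_iy_ik_i^2,
\end{equation}
exactly as in the restricted case.

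Next I would interchange the order of summation so that the $\{x_i\}$ and $\{y_i\}$ weights come outside, and substitute the unrestricted evaluations. The off-diagonal terms $k_ik_j$ with $i\neq j$ are given by Theorem~\ref{simple11}, contributing $(a_i+c_i)(a_j+c_j)$, while the diagonal terms $k_i^2$ are given by Theorem~\ref{simple2}, contributing $(a_i+c_i)^2+a_i-c_i$. Both carry the common prefactor $2^{A_0-C_0-2}\prod_i\binom{a_i}{c_i}$, so after factoring it out the quantity left to simplify is
\begin{equation}
 \dblsum{i}{j} x_i y_j (a_i+c_i)(a_j+c_j) + \sum_{i=1}^m x_iy_i[(a_i+c_i)^2+a_i-c_i].
\end{equation}

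The key algebraic step is to re-close the restricted double sum into an unrestricted one that factors as a product. Replacing the $i\neq j$ sum by the full double sum $\sum_{i=1}^m\sum_{j=1}^m$ minus its diagonal turns the first term into $(A_1+C_1)(A_1^*+C_1^*)-\sum_i x_iy_i(a_i+c_i)^2$, since $\sum_i x_i(a_i+c_i)=A_1+C_1$ and $\sum_i y_i(a_i+c_i)=A_1^*+C_1^*$. The $(a_i+c_i)^2$ contributions then cancel against the corresponding diagonal term, leaving $(A_1+C_1)(A_1^*+C_1^*)+\sum_i x_iy_i(a_i-c_i)$. Recognising $\sum_i x_iy_ia_i=A_{1,1}^*$ and $\sum_i x_iy_ic_i=C_{1,1}^*$ then yields precisely $A_{1,1}^*-C_{1,1}^*+(A_1+C_1)(A_1^*+C_1^*)$, as claimed.

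I expect no genuine obstacle here; the only point requiring care is the bookkeeping in this recombination, making sure the $(a_i+c_i)^2$ terms arising from the factored product and from the diagonal cancel exactly, so that the surviving diagonal contribution collapses to the clean difference $A_{1,1}^*-C_{1,1}^*$.
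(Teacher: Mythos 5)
Your proposal is correct and follows essentially the same route as the paper: the same split of the product into the off-diagonal sum plus the diagonal sum, the same invocation of Theorems \ref{simple11} and \ref{simple2}, and the same recombination in which the $(a_i+c_i)^2$ diagonal terms cancel, leaving $A_{1,1}^*-C_{1,1}^*+(A_1+C_1)(A_1^*+C_1^*)$.
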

\begin{proof}
The product in the summand becomes:
\begin{equation}
 (\sum_{i=1}^m x_i k_i)(\sum_{i=1}^m y_i k_i) 
  = \dblsum{i}{j} x_i y_j k_i k_j 
   + \sum_{i=1}^m x_iy_ik_i^2 
\end{equation}
Changing the order of summation and using theorem \ref{simple2} and \ref{simple11}.
\begin{equation}
\begin{split}
 & \dblsum{i}{j} x_iy_j(a_i+c_i)(a_j+c_j) + \sum_{i=1}^m x_iy_i [(a_i+c_i)^2+a_i-c_i] \\
 & = \sum_{i=1}^m \sum_{j=1}^m x_iy_j(a_i+c_i)(a_j+c_j) + \sum_{i=1}^m x_iy_i(a_i-c_i) \\
 & = A_{1,1}^*-C_{1,1}^* + (A_1+C_1)(A_1^*+C_1^*) \\
\end{split}
\end{equation}
\end{proof}
\begin{theorem}
For integer $m\geq 1$:
\begin{equation}
\begin{split}
 & \multsumfree \left[ \prod_{i=1}^m \binom{a_i}{k_i}\binom{k_i}{c_i} \right] (\sum_{i=1}^m x_i k_i)^2 \\
 & = 2^{A_0-C_0-2} \left[ \prod_{i=1}^m \binom{a_i}{c_i} \right] [ A_2-C_2 + (A_1+C_1)^2 ] \\
\end{split}
\end{equation}
\end{theorem}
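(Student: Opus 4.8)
The plan is to obtain this identity as a one-line specialization of the preceding theorem, in exact analogy to how Theorem~\ref{theoremres3} was extracted from Theorem~\ref{theoremres2} in the restricted setting. The preceding theorem evaluates the free multi-sum of the bilinear form $(\sum_{i=1}^m x_i k_i)(\sum_{i=1}^m y_i k_i)$ as $2^{A_0-C_0-2}[\prod_{i=1}^m \binom{a_i}{c_i}][A_{1,1}^*-C_{1,1}^* + (A_1+C_1)(A_1^*+C_1^*)]$. I would simply set $y_i = x_i$ for every $i$. The starred quantities then collapse by their definitions: $A_1^* = \sum y_i a_i = \sum x_i a_i = A_1$ and $C_1^* = C_1$, while $A_{1,1}^* = \sum x_i y_i a_i = \sum x_i^2 a_i = A_2$ and likewise $C_{1,1}^* = C_2$. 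Substituting these four equalities into the right-hand side turns $A_{1,1}^*-C_{1,1}^* + (A_1+C_1)(A_1^*+C_1^*)$ into $A_2 - C_2 + (A_1+C_1)^2$, which is precisely the asserted closed form.

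As an independent check, I would also carry out a self-contained derivation parallel to the earlier free-sum proofs. First I would expand the square into distinct-index and coincident-index parts,
\begin{equation}
 (\sum_{i=1}^m x_i k_i)^2 = \dblsum{p}{q} x_p x_q k_p k_q + \sum_{p=1}^m x_p^2 k_p^2,
\end{equation}
then interchange the order of summation and apply Theorem~\ref{simple11} to the off-diagonal block and Theorem~\ref{simple2} to the diagonal block. After factoring out the common prefactor $2^{A_0-C_0-2}\prod_{i=1}^m \binom{a_i}{c_i}$, this leaves $\dblsum{i}{j} x_i x_j (a_i+c_i)(a_j+c_j) + \sum_{i=1}^m x_i^2[(a_i+c_i)^2 + a_i - c_i]$. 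Folding the diagonal squares back into the restricted double sum recombines it into the full product $(\sum_i x_i(a_i+c_i))^2 = (A_1+C_1)^2$, and the residual diagonal contribution $\sum_i x_i^2(a_i-c_i) = A_2 - C_2$ supplies the remaining term.

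I do not anticipate any genuine obstacle: the argument is purely algebraic, and the one point demanding attention is the index bookkeeping when converting the restricted double sum into an unrestricted one, so that the diagonal is absorbed exactly once rather than double-counted. Since the specialization route is far shorter, I would present that as the proof and, if at all, mention the direct expansion only as a remark.
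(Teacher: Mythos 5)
Your primary route is exactly the paper's proof: the paper likewise obtains this identity by setting $y_i=x_i$ in the preceding bilinear theorem, whereupon $A_1^*=A_1$, $C_1^*=C_1$, $A_{1,1}^*=A_2$ and $C_{1,1}^*=C_2$. The proposal is correct, and the additional direct-expansion check is sound but redundant since it just re-derives the bilinear theorem's proof in the special case.
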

\begin{proof}
In the previous theorem taking $y_i=x_i$, then $A_1^*=A_1$, $C_1^*=C_1$, $A_{1,1}^*=A_2$ and $C_{1,1}^*=C_2$
gives this theorem.
\end{proof}
\begin{theorem}
For integer $m\geq 1$:
\begin{equation}
\begin{split}
 & \multsumfree \left[ \prod_{i=1}^m \binom{a_i}{k_i}\binom{k_i}{c_i} \right] |\sum_{i=1}^m x_i k_i|^2 \\
 & = 2^{A_0-C_0-2} \left[ \prod_{i=1}^m \binom{a_i}{c_i} \right] ( \Aabs-\Cabs + |A_1+C_1|^2 ) \\
\end{split}
\end{equation}
\end{theorem}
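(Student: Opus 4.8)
The plan is to mirror the proof of Theorem~\ref{theoremabs}, replacing the role of the restricted quadratic identity (Theorem~\ref{theoremres3}) by the unrestricted quadratic identity proved in the previous theorem. First I would write the modulus as a sum of squares of real linear forms,
\begin{equation}
 |\sum_{i=1}^m x_ik_i|^2 = (\sum_{i=1}^m \re(x_i)k_i)^2 + (\sum_{i=1}^m \im(x_i)k_i)^2 ,
\end{equation}
exactly as in Theorem~\ref{theoremabs}. Each of the two terms is then an instance of the preceding unrestricted theorem for $(\sum_i x_ik_i)^2$, applied once with the $x_i$ replaced by the real numbers $\re(x_i)$ and once with the $x_i$ replaced by $\im(x_i)$, so that no new evaluation of the multiple sum is required.

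Next I would add the two resulting expressions. The prefactor $2^{A_0-C_0-2}\prod_{i=1}^m\binom{a_i}{c_i}$ does not depend on the coefficients $x_i$, so it is common to both instances and factors out; what remains is to combine the two bracketed quantities $A_2-C_2+(A_1+C_1)^2$ evaluated at $\re(x_i)$ and at $\im(x_i)$. For the first piece, $\re^2(x_i)+\im^2(x_i)=|x_i|^2$ gives
\begin{equation}
 \sum_{i=1}^m \re^2(x_i)a_i + \sum_{i=1}^m \im^2(x_i)a_i = \sum_{i=1}^m |x_i|^2 a_i = \Aabs ,
\end{equation}
and likewise $\Cabs$ for the $c_i$-weighted sum, so the two copies of $A_2-C_2$ combine into $\Aabs-\Cabs$. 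For the second piece, because the weights $a_i+c_i$ are real one has $\re(\sum_i x_i(a_i+c_i))=\sum_i \re(x_i)(a_i+c_i)$ and similarly for the imaginary part, whence
\begin{equation}
 (\sum_{i=1}^m \re(x_i)(a_i+c_i))^2 + (\sum_{i=1}^m \im(x_i)(a_i+c_i))^2 = |\sum_{i=1}^m x_i(a_i+c_i)|^2 = |A_1+C_1|^2 .
\end{equation}
Assembling these pieces produces $\Aabs-\Cabs+|A_1+C_1|^2$, which is the stated right-hand side.

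I expect no genuine obstacle here. In contrast to Theorem~\ref{theoremabs}, the decomposition $|z|^2=\re^2 z+\im^2 z$ carries no cross term, and because the weights $a_i+c_i$ are real the two squares recombine immediately into the single modulus $|A_1+C_1|^2$. In particular the bilinear identity $2[\re(a)\re(c)+\im(a)\im(c)]=a\overline{c}+\overline{a}c$ invoked in Theorem~\ref{theoremabs} is not needed, since the cross term is already packaged inside $|A_1+C_1|^2$. The only verification required is the elementary recombination of real and imaginary parts recorded above, which is routine.
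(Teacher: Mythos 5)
Your proposal is correct and follows essentially the same route as the paper: the paper likewise invokes the preceding unrestricted theorem for $(\sum_i x_ik_i)^2$ with $x_i$ replaced by $\re(x_i)$ and $\im(x_i)$, reuses the real--imaginary decomposition from Theorem~\ref{theoremabs}, and closes with the same recombination $[\re(A_1)+\re(C_1)]^2+[\im(A_1)+\im(C_1)]^2=|A_1+C_1|^2$. Your additional observation that the cross-term identity from Theorem~\ref{theoremabs} is not needed here is accurate and consistent with the paper's (terser) argument.
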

\begin{proof}
Using the previous theorem and using the same method as in theorem \ref{theoremabs}
and using:
\begin{equation}
 [\re(A_1)+\re(C_1)]^2 + [\im(A_1)+\im(C_1)]^2 = \re^2(A_1+C_1) + \im^2(A_1+C_1) = |A_1+C_1|^2
\end{equation}
gives the theorem.
\end{proof}
\begin{theorem}
For integer $m\geq 1$:
\begin{equation}
\begin{split}
 & \multsumfree \left[ \prod_{i=1}^m \binom{a_i}{k_i}\binom{k_i}{c_i} \right] \sum_{i=1}^m x_i k_i^2 \\
 & = 2^{A_0-C_0-2} \left[ \prod_{i=1}^m \binom{a_i}{c_i} \right] ( A_1-C_1+A_{1,2}+C_{1,2}+2S_{1,1} ) \\
\end{split}
\end{equation}
\end{theorem}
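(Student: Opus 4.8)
The plan is to follow the same strategy used for the preceding unrestricted identities: interchange the finite sum $\sum_{i=1}^m x_i k_i^2$ with the multiple sum over the $k$-indices, thereby reducing the problem to the single-index quadratic sum already evaluated in theorem \ref{simple2}. Because the multiple sum carries no restriction on the $k_i$, the order of summation may be freely swapped, giving
\begin{equation}
 \multsumfree \left[ \prod_{i=1}^m \binom{a_i}{k_i}\binom{k_i}{c_i} \right] \sum_{i=1}^m x_i k_i^2
 = \sum_{i=1}^m x_i \multsumfree \left[ \prod_{j=1}^m \binom{a_j}{k_j}\binom{k_j}{c_j} \right] k_i^2 .
\end{equation}

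Next I would apply theorem \ref{simple2} with $p=i$ to each inner multiple sum. This produces the common prefactor $2^{A_0-C_0-2}\prod_{j=1}^m \binom{a_j}{c_j}$ times the factor $(a_i+c_i)^2 + a_i - c_i$. Pulling the prefactor outside the $i$-summation leaves the single finite sum
\begin{equation}
 \sum_{i=1}^m x_i \left[ (a_i+c_i)^2 + a_i - c_i \right] .
\end{equation}

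The remaining step is a routine expansion: writing $(a_i+c_i)^2 = a_i^2 + 2 a_i c_i + c_i^2$ and distributing $x_i$ across the five resulting terms, I would match each against the definitions of Section 1, namely $\sum_{i=1}^m x_i a_i^2 = A_{1,2}$, $\sum_{i=1}^m x_i c_i^2 = C_{1,2}$, $2\sum_{i=1}^m x_i a_i c_i = 2 S_{1,1}$, $\sum_{i=1}^m x_i a_i = A_1$, and $\sum_{i=1}^m x_i c_i = C_1$. Collecting these yields $A_1 - C_1 + A_{1,2} + C_{1,2} + 2 S_{1,1}$, which is exactly the bracketed factor in the statement. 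There is no substantive obstacle; the only point requiring care is tracking the definition $S_{p,q} = \sum_{i=1}^m x_i a_i^p c_i^q$ correctly, so that the cross term $2 a_i c_i$ is identified with $2 S_{1,1}$ and not mistaken for a product of separate $A$- and $C$-type quantities.
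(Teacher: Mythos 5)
Your proposal is correct and matches the paper's own proof essentially verbatim: both interchange the order of summation, apply the single-index quadratic result of theorem \ref{simple2} termwise, and expand $\sum_{i=1}^m x_i[(a_i+c_i)^2+a_i-c_i]$ into $A_{1,2}+C_{1,2}+2S_{1,1}+A_1-C_1$ using the definitions of Section 1. No gaps.
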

\begin{proof}
Changing the order of summation and using theorem \ref{simple2}.
\begin{equation}
\begin{split}
 & \sum_{i=1}^m x_i [ (a_i+c_i)^2 + a_i - c_i ] = \sum_{i=1}^m x_i ( a_i^2+c_i^2+2a_ic_i+a_i-c_i ) \\
 & = A_{1,2}+C_{1,2}+2S_{1,1}+A_1-C_1 \\
\end{split}
\end{equation}
\end{proof}
\begin{theorem}\label{simple3}
For integer $m\geq 1$ and $1\leq p\leq m$:
\begin{equation}
  \multsumfree \left[ \prod_{i=1}^m \binom{a_i}{k_i}\binom{k_i}{c_i} \right] k_p^3 
  = 2^{A_0-C_0-3} \left[ \prod_{i=1}^m \binom{a_i}{c_i} \right] (a_p+c_p)[(a_p+c_p)^2 + 3(a_p-c_p) ] 
\end{equation}
\end{theorem}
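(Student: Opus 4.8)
The plan is to use, exactly as in theorems \ref{simple0}, \ref{simple1} and \ref{simple2}, that without the restriction on the indices the multiple sum factors into a product of individual sums:
\begin{equation}
 \multsumfree \left[ \prod_{i=1}^m \binom{a_i}{k_i}\binom{k_i}{c_i} \right] k_p^3
  = \left[ \prod_{i\neq p} \sum_{k_i=0}^{a_i} \binom{a_i}{k_i}\binom{k_i}{c_i} \right]
    \sum_{k_p=0}^{a_p} \binom{a_p}{k_p}\binom{k_p}{c_p}\, k_p^3
\end{equation}
By the computation in theorem \ref{simple0} every factor with $i\neq p$ equals $\binom{a_i}{c_i}2^{a_i-c_i}$, so only the $p$-th sum carrying the factor $k_p^3$ needs a separate evaluation. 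First I would apply the trinomial revision identity $\binom{a_p}{k_p}\binom{k_p}{c_p}=\binom{a_p-c_p}{k_p-c_p}\binom{a_p}{c_p}$, which reduces the task to evaluating $\sum_{k=0}^a \binom{a-c}{k-c}k^3$.

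Next I would use the integral representation (\ref{binomdef}) together with the cubic modified geometric series already employed in theorem \ref{theorem3},
\begin{equation}
 \sum_{k=0}^{\infty} k^3 w^k = \frac{w}{(1-w)^2} + \frac{6w^2}{(1-w)^3} + \frac{6w^3}{(1-w)^4}
\end{equation}
treating the whole series at once exactly as in theorem \ref{simple2}, but with the quadratic series replaced by this cubic one. After interchanging sum and residue and substituting $w\to 1/w$, this converts to
\begin{equation}
 \sum_{k=0}^a \binom{a-c}{k-c} k^3
  = \res{w} (1+w)^{a-c}w^c \left[ \frac{1}{(w-1)^2} + \frac{6}{(w-1)^3} + \frac{6}{(w-1)^4} \right]
\end{equation}
the residue being taken at the enclosed pole $w=1$.

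Finally I would evaluate this residue with (\ref{resdef}). Writing $g(w)=(1+w)^{a-c}w^c$, the three terms contribute $g'(1)$, $3g''(1)$ and $g'''(1)$ respectively, so the sum equals $g'(1)+3g''(1)+g'''(1)$; expanding the derivatives by the product rule yields the four monomial contributions $(a-c)(a-c-1)(a-c-2)$, $3(a-c)(a-c-1)c$, $3(a-c)c(c-1)$ and $c(c-1)(c-2)$, each multiplied by a power of $2$, together with the lower-order pieces from $g'$ and $g''$. The main obstacle is the ensuing algebraic simplification: collecting these powers of $2$ and polynomials in $a$ and $c$ and showing the whole expression collapses to $2^{a-c-3}(a+c)[(a+c)^2+3(a-c)]$. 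This is the same routine but lengthy reduction already seen in theorem \ref{simple2}, and it can be checked independently, for instance the case $c=0$ reproduces the known value $\sum_k\binom{a}{k}k^3=a^2(a+3)2^{a-3}$. Multiplying the resulting $p$-th factor by the unchanged factors $\binom{a_i}{c_i}2^{a_i-c_i}$ for $i\neq p$ then gives the stated identity.
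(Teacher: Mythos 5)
Your proposal is correct and follows essentially the same route as the paper's proof: factor the unrestricted sum into individual sums, apply trinomial revision, use the cubic modified geometric series to reduce the $p$-th factor to $\res{w}(1+w)^{a-c}w^c[(w-1)^{-2}+6(w-1)^{-3}+6(w-1)^{-4}]$, and evaluate the residue at $w=1$ via (\ref{resdef}); your expression $g'(1)+3g''(1)+g'''(1)$ reproduces exactly the terms the paper lists before simplifying to $2^{a-c-3}(a+c)[(a+c)^2+3(a-c)]$.
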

\begin{proof}
The following modified infinite geometric series \cite{K19} is used:
\begin{equation}
 \sum_{k=0}^{\infty} k^3 w^k = \frac{w}{(1-w)^2} + \frac{6w^2}{(1-w)^3} + \frac{6w^3}{(1-w)^4}
\end{equation}
One of the individual sums of the product becomes:
\begin{equation}
\begin{split}
 & \sum_{k=0}^a \binom{a-c}{k-c} k^3 = \res{w} (1+w)^{a-c}w^{c-1} \sum_{k=0}^{\infty} k^3 (\frac{1}{w})^k \\
 & = \res{w} (1+w)^{a-c}w^{c-1} [\frac{1}{w(1-1/w)^2} + \frac{6}{w^2(1-1/w)^3} + \frac{6}{w^3(1-1/w)^4} ] \\
 & = \res{w} (1+w)^{a-c}w^c [ \frac{1}{(w-1)^2} + \frac{6}{(w-1)^3} + \frac{6}{(w-1)^4} ] \\
 & = 2^{a-c-1}(a+c) + 3 [ (a-c)(a-c-1)2^{a-c-2} + 2(a-c)c 2^{a-c-1} + c(c-1)2^{a-c} ] \\
 & \quad + (a-c)(a-c-1)(a-c-2)2^{a-c-3} + 3(a-c)(a-c-1)c2^{a-c-2} \\
 & \quad + 3(a-c)c(c-1)2^{a-c-1} + c(c-1)(c-2)2^{a-c} \\ 
 & = 2^{a-c-3} (a+c)[(a+c)^2+3(a-c)] \\
\end{split}
\end{equation}
The other individual sums of the product remain identical,
so the product becomes the theorem.
\end{proof}
\begin{theorem}\label{simple12}
For integer $m\geq 1$, $1\leq p\leq m$, $1\leq q\leq m$ and $p\neq q$:
\begin{equation}
  \multsumfree \left[ \prod_{i=1}^m \binom{a_i}{k_i}\binom{k_i}{c_i} \right] k_p k_q^2 
  = 2^{A_0-C_0-3} \left[ \prod_{i=1}^m \binom{a_i}{c_i} \right] (a_p+c_p)[(a_q+c_q)^2 + a_q-c_q ] 
\end{equation}
\end{theorem}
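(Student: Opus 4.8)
The plan is to exploit the factorization of the unrestricted multi-sum into a product of single-index sums, exactly as in the proof of Theorem \ref{simple0}. Because nothing ties the indices together and $p\neq q$, the factor $k_p$ and the factor $k_q^2$ land in two distinct single-index sums, so I would begin by writing
\begin{equation}
\begin{split}
 & \multsumfree \left[\prod_{i=1}^m \binom{a_i}{k_i}\binom{k_i}{c_i}\right] k_p k_q^2 \\
 & = \left[\mulprod{i=1}{i\neq p,\,i\neq q}{m} \sum_{k_i=0}^{a_i}\binom{a_i}{k_i}\binom{k_i}{c_i}\right] \\
 & \quad \cdot \left[\sum_{k_p=0}^{a_p}\binom{a_p}{k_p}\binom{k_p}{c_p} k_p\right]\left[\sum_{k_q=0}^{a_q}\binom{a_q}{k_q}\binom{k_q}{c_q} k_q^2\right] \\
\end{split}
\end{equation}
where each bracketed sum ranges over a single index.

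Next I would evaluate the three kinds of single-index sums, each of which has already been computed earlier. The plain sums over the indices $i\neq p,q$ each equal $2^{a_i-c_i}\binom{a_i}{c_i}$ by the individual-sum computation inside the proof of Theorem \ref{simple0}. The sum carrying the factor $k_p$ is precisely the one evaluated in the proof of Theorem \ref{simple1}, giving $2^{a_p-c_p-1}(a_p+c_p)\binom{a_p}{c_p}$, and the sum carrying $k_q^2$ is the one evaluated in the proof of Theorem \ref{simple2}, giving $2^{a_q-c_q-2}[(a_q+c_q)^2+a_q-c_q]\binom{a_q}{c_q}$. In this approach no fresh residue calculation is needed: the trinomial revision identity and the modified geometric series are already folded into those two earlier evaluations.

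Finally I would collect the factors. Adding the exponents of $2$ gives $\sum_{i\neq p,q}(a_i-c_i)+(a_p-c_p-1)+(a_q-c_q-2)=A_0-C_0-3$, the binomial factors recombine into $\prod_{i=1}^m \binom{a_i}{c_i}$, and the $k_p$- and $k_q^2$-sums contribute $(a_p+c_p)[(a_q+c_q)^2+a_q-c_q]$, which is exactly the claimed right-hand side. I do not expect a genuine obstacle here; the only thing to watch is the bookkeeping of the exponent of $2$, and the hypothesis $p\neq q$ is precisely what guarantees the clean separation into two previously solved single-index sums (mirroring how Theorem \ref{simple11} reduces to applying Theorem \ref{simple1} twice).
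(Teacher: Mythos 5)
Your proposal is correct and is exactly the paper's argument: the paper proves this theorem by the one-line remark that it ``follows from applying theorem \ref{simple1} and \ref{simple2}'', which is precisely the factorization into single-index sums that you spell out. Your exponent bookkeeping $(a_p-c_p-1)+(a_q-c_q-2)+\sum_{i\neq p,q}(a_i-c_i)=A_0-C_0-3$ checks out.
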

\begin{proof}
This theorem follows from applying theorem \ref{simple1} and \ref{simple2}.
\end{proof}
\begin{theorem}\label{simple111}
For integer $m\geq 3$, $1\leq p\leq m$, $1\leq q\leq m$, $1\leq r\leq m$, $p\neq q\neq r$ and $p\neq r$:
\begin{equation}
 \multsumfree \left[ \prod_{i=1}^m \binom{a_i}{k_i}\binom{k_i}{c_i} \right] k_p k_q k_r 
  = 2^{A_0-C_0-3} \left[ \prod_{i=1}^m \binom{a_i}{c_i} \right] (a_p+c_p)(a_q+c_q)(a_r+c_r)
\end{equation}
\end{theorem}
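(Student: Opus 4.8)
The plan is to mimic the structure of Theorems \ref{simple11} and \ref{simple12}, exploiting the crucial fact (already used in Theorem \ref{simple0}) that an \emph{unrestricted} multi-sum of a product factors into a product of single-index sums. Since no constraint ties the $k_i$ together here, and since $p$, $q$, $r$ are mutually distinct, the three factors $k_p$, $k_q$, $k_r$ each land in a \emph{different} one of the independent sums. Concretely, I would first write
\begin{equation*}
 \multsumfree \left[ \prod_{i=1}^m \binom{a_i}{k_i}\binom{k_i}{c_i} \right] k_p k_q k_r
 = \prod_{\genfrac{}{}{0pt}{}{\scriptstyle i=1}{\scriptstyle i\neq p,q,r}}^{m} \left[ \sum_{k_i=0}^{a_i} \binom{a_i}{k_i}\binom{k_i}{c_i} \right]
   \prod_{j\in\{p,q,r\}} \left[ \sum_{k_j=0}^{a_j} \binom{a_j}{k_j}\binom{k_j}{c_j} k_j \right],
\end{equation*}
so that the problem reduces entirely to single-variable evaluations.

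Next I would invoke the two single-index results already established. For each index $i\notin\{p,q,r\}$, the inner sum is exactly the one computed in Theorem \ref{simple0}, giving $2^{a_i-c_i}\binom{a_i}{c_i}$. For each of the three distinguished indices $j\in\{p,q,r\}$, the inner sum carries the extra factor $k_j$ and is precisely the sum evaluated in the proof of Theorem \ref{simple1}, yielding $2^{a_j-c_j-1}(a_j+c_j)\binom{a_j}{c_j}$. Substituting these into the factored expression, the binomial coefficients assemble into $\prod_{i=1}^m \binom{a_i}{c_i}$ and the linear factors assemble into $(a_p+c_p)(a_q+c_q)(a_r+c_r)$, exactly as required.

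The only remaining bookkeeping is the power of $2$: the $m-3$ undistinguished factors contribute $2^{\sum_{i\neq p,q,r}(a_i-c_i)}$ and the three distinguished factors contribute $2^{\sum_{j\in\{p,q,r\}}(a_j-c_j-1)}$, so the exponents add to $\sum_{i=1}^m (a_i-c_i) - 3 = A_0-C_0-3$, matching the stated right-hand side. There is no real obstacle here: everything factors cleanly, and the proof is essentially a one-line citation, namely that the theorem follows from applying Theorem \ref{simple1} three times (to the three distinct indices) together with Theorem \ref{simple0} for the remaining factors. The mildest point of care is simply confirming that distinctness of $p$, $q$, $r$ is what guarantees the three $k_j$-weights sit in separate factors, which is exactly why the hypothesis $p\neq q\neq r$, $p\neq r$ is imposed.
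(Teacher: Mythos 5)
Your proposal is correct and follows essentially the same route as the paper, whose proof is the one-line observation that the result follows from applying Theorem \ref{simple1} three times; you have simply made explicit the underlying factorization of the unrestricted multi-sum into independent single-index sums and the resulting bookkeeping of the powers of $2$. No gaps.
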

\begin{proof}
This theorem follows from applying theorem \ref{simple1} three times.
\end{proof}
\begin{theorem}
For integer $m\geq 1$:
\begin{equation}
\begin{split}
 & \multsumfree \left[ \prod_{i=1}^m \binom{a_i}{k_i}\binom{k_i}{c_i} \right] (\sum_{i=1}^m x_i k_i)^3 \\
 & = 2^{A_0-C_0-3} \left[ \prod_{i=1}^m \binom{a_i}{c_i} \right] (A_1+C_1)[(A_1+C_1)^2 + 3(A_2-C_2)] \\
\end{split}
\end{equation}
\end{theorem}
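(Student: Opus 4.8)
The plan is to follow the same template as the preceding unrestricted cubic proofs: split the cubed linear form into sums over mutually distinct indices, apply the three monomial theorems already established, extract the common prefactor, and then re-collapse the distinct-index sums into the claimed factored form. First I would expand the summand using the cubic splitting formula (\ref{tripexpand}) with all three coefficient sets taken equal to $\{x_i\}$, which gives
\[
 \left(\sum_{i=1}^m x_i k_i\right)^3
 = \tripsum{p}{q}{r} x_p x_q x_r\, k_p k_q k_r
 + 3\dblsum{p}{q} x_p x_q^2\, k_p k_q^2
 + \sum_{p=1}^m x_p^3 k_p^3 ,
\]
the coefficient $3$ arising because the three ways of letting two of the three indices coincide all collapse to the single monomial $x_p x_q^2$.

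Interchanging the order of summation and applying theorems \ref{simple111}, \ref{simple12} and \ref{simple3} to the three pieces, every resulting term carries the common factor $2^{A_0-C_0-3}\prod_{i=1}^m\binom{a_i}{c_i}$. After extracting it, the task reduces to showing that the sum
\[
 \tripsum{p}{q}{r} x_p x_q x_r (a_p+c_p)(a_q+c_q)(a_r+c_r)
 + 3\dblsum{p}{q} x_p x_q^2 (a_p+c_p)[(a_q+c_q)^2+a_q-c_q]
 + \sum_{p=1}^m x_p^3 (a_p+c_p)[(a_p+c_p)^2+3(a_p-c_p)]
\]
equals $(A_1+C_1)[(A_1+C_1)^2+3(A_2-C_2)]$. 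To identify this, I would run the splitting formula (\ref{tripexpand}) \emph{in reverse}. Writing $A_1+C_1=\sum_i x_i(a_i+c_i)$ and $A_2-C_2=\sum_i x_i^2(a_i-c_i)$, the cube $(A_1+C_1)^3$ expands by (\ref{tripexpand}) into exactly the triple-distinct sum above together with the diagonal corrections $3\dblsum{p}{q}x_px_q^2(a_p+c_p)(a_q+c_q)^2$ and $\sum_p x_p^3(a_p+c_p)^3$, while the cross term $3(A_1+C_1)(A_2-C_2)=3\sum_{i}\sum_{j} x_ix_j^2(a_i+c_i)(a_j-c_j)$ splits into its off-diagonal part $3\dblsum{p}{q}x_px_q^2(a_p+c_p)(a_q-c_q)$ and its diagonal part $3\sum_p x_p^3(a_p+c_p)(a_p-c_p)$. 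Adding the two expansions reproduces the reduced sum term by term, since the double-sum coefficients collect to $(a_q+c_q)^2+(a_q-c_q)$ and the single-sum coefficients to $(a_p+c_p)^2+3(a_p-c_p)$, matching the outputs of theorems \ref{simple12} and \ref{simple3}.

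The computation is pure bookkeeping; the one place that demands care is aligning the diagonal correction terms. The hard part will be confirming that the coincident-index contributions produced by theorems \ref{simple12} and \ref{simple3}, namely the extra $a_q-c_q$ and $3(a_p-c_p)$, are precisely what the cross term $3(A_1+C_1)(A_2-C_2)$ supplies once it is resolved into its off-diagonal and diagonal pieces. Once the triple-, double-, and single-index families are matched against the reverse expansion in this way, the stated factored form follows immediately.
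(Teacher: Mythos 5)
Your proposal is correct and follows the paper's own proof essentially verbatim: expand the cube via (\ref{tripexpand}) into triple-, double-, and single-index pieces, apply theorems \ref{simple111}, \ref{simple12} and \ref{simple3}, and recombine the distinct-index sums into $(A_1+C_1)^3+3(A_1+C_1)(A_2-C_2)$. The diagonal bookkeeping you flag as the delicate step is exactly the intermediate line the paper writes out, so nothing is missing.
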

\begin{proof}
Changing the order of summation and using theorem \ref{simple3}, \ref{simple12} and \ref{simple111}.
\begin{equation}
\begin{split}
 & \tripsum{i}{j}{k} x_ix_jx_k (a_i+c_i)(a_j+c_j)(a_k+c_k) \\
 & \quad + 3 \dblsum{i}{j} x_ix_j^2 (a_i+c_i)((a_j+c_j)^2+a_j-c_j) \\
 & \quad + \sum_{i=1}^m x_i^3 (a_i+c_i)[(a_i+c_i)^2+3(a_i-c_i)] \\
 & = (A_1+C_1)^3 + 3 \dblsum{i}{j} x_ix_j^2 (a_i+c_i)(a_j-c_j ) 
  + 3 \sum_{i=1}^m x_i^3 (a_i+c_i)(a_i-c_i) \\
 & = (A_1+C_1)^3 + 3(A_1+C_1)(A_2-C_2) = (A_1+C_1)[(A_1+C_1)^2+3(A_2-C_2)] \\
\end{split}
\end{equation}
\end{proof}
\begin{theorem}
For integer $m\geq 1$:
\begin{equation}
\begin{split}
 & \multsumfree \left[ \prod_{i=1}^m \binom{a_i}{k_i}\binom{k_i}{c_i} \right] \sum_{i=1}^m x_i k_i^3 \\
 & = 2^{A_0-C_0-3} \left[ \prod_{i=1}^m \binom{a_i}{c_i} \right] [ A_{1,3}+C_{1,3}+3(A_{1,2}-C_{1,2}+S_{1,2}+S_{2,1}) ] \\
\end{split}
\end{equation}
\end{theorem}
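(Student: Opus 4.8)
The plan is to follow the same template used for the earlier unrestricted identities such as the one for $\sum_{i=1}^m x_i k_i^2$: change the order of summation so that the weight $\sum_{i=1}^m x_i k_i^3$ is pulled outside and each summand reduces to a single-index cube $k_p^3$, and then invoke the already-established single-index result, theorem \ref{simple3}.

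First I would write $\sum_{i=1}^m x_i k_i^3 = \sum_{p=1}^m x_p k_p^3$ and interchange this sum with the multiple sum over the $k_i$, so that the left side becomes
\[
 \sum_{p=1}^m x_p \multsumfree \left[\prod_{i=1}^m \binom{a_i}{k_i}\binom{k_i}{c_i}\right] k_p^3.
\]
Unlike the restricted case, no splitting into distinct-index subsums is required here, since the unrestricted multi-sum of $k_p^3$ is evaluated directly by theorem \ref{simple3}. Substituting yields
\[
 2^{A_0-C_0-3}\left[\prod_{i=1}^m \binom{a_i}{c_i}\right] \sum_{p=1}^m x_p (a_p+c_p)[(a_p+c_p)^2+3(a_p-c_p)].
\]

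The remaining step is purely algebraic. I would expand
\[
 (a_p+c_p)[(a_p+c_p)^2+3(a_p-c_p)] = a_p^3+3a_p^2c_p+3a_pc_p^2+c_p^3+3a_p^2-3c_p^2,
\]
multiply by $x_p$, and sum over $p$. Using the definitions $A_{1,3}=\sum_i x_i a_i^3$, $C_{1,3}=\sum_i x_i c_i^3$, $S_{2,1}=\sum_i x_i a_i^2 c_i$, $S_{1,2}=\sum_i x_i a_i c_i^2$, $A_{1,2}=\sum_i x_i a_i^2$ and $C_{1,2}=\sum_i x_i c_i^2$, each of the six resulting terms maps directly onto one of the defined quantities, giving $A_{1,3}+C_{1,3}+3(A_{1,2}-C_{1,2}+S_{1,2}+S_{2,1})$ inside the brackets, which is exactly the stated right side.

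There is essentially no obstacle beyond careful bookkeeping: theorem \ref{simple3} does all the analytic work, and the only thing left to verify is that the six monomials in the expansion are matched correctly to the notation $A_{p,q}$, $C_{p,q}$ and $S_{p,q}$. The single point I would double-check is the pairing of $S_{2,1}$ with $a_p^2c_p$ versus $S_{1,2}$ with $a_pc_p^2$, since those two are the easiest to transpose.
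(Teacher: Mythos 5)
Your proposal is correct and matches the paper's own proof: both interchange the order of summation, apply theorem \ref{simple3} to each $k_p^3$ term, and expand $(a_p+c_p)[(a_p+c_p)^2+3(a_p-c_p)]$ into the six monomials identified with $A_{1,3}$, $C_{1,3}$, $A_{1,2}$, $C_{1,2}$, $S_{2,1}$ and $S_{1,2}$. The pairing you flagged is right as you have it, with $S_{2,1}$ matching $a_p^2c_p$ and $S_{1,2}$ matching $a_pc_p^2$.
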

\begin{proof}
Changing the order of summation and using theorem \ref{simple3}.
\begin{equation}
\begin{split}
 & \sum_{i=1}^m x_i (a_i+c_i) [ (a_i+c_i)^2 + 3(a_i-c_i) ] \\
 & = \sum_{i=1}^m x_i ( a_i^3+c_i^3+3a_i^2-3c_i^2+3a_ic_i^2+3a_i^2c_i) \\
 & = A_{1,3}+C_{1,3}+3(A_{1,2}-C_{1,2}+S_{1,2}+S_{2,1}) \\
\end{split}
\end{equation}
\end{proof}

\section{A Proof of Residue Identity \ref{resdef}}\label{ressect}

\begin{theorem}
When a complex function $f(z)$ has a pole of order $m$ at $z=z_p$,
then the residue of this function at $z=z_p$ is \cite{K08}: 
\begin{equation}
 \res{z=z_p} f(z) = \frac{1}{(m-1)!} D_z^{m-1} [(z-z_p)^m f(z)] |_{z=z_p} 
\end{equation}
where $D_z^n f(z)|_{z=z_p}$ is the $n$-th derivative of $f(z)$ at $z=z_p$.
\end{theorem}
\begin{proof}
When a complex function $f(z)$ has a pole of order $m$ at $z=z_p$,
then the residue of $f(z)$ at $z=z_p$ can be defined as the
coefficient $a_{-1}$ in the Laurent series expansion of $f(z)$ at $z=z_p$,
where $a_{-m}\neq 0$ \cite{K08,RW00}:
\begin{equation}
 f(z) = \sum_{k=-m}^{\infty} a_k (z-z_p)^k
\end{equation}
From this follows:
\begin{equation}
 (z-z_p)^m f(z) = \sum_{k=-m}^{\infty} a_k (z-z_p)^{m+k}
 = \sum_{k=0}^{\infty} a_{k-m} (z-z_p)^k
\end{equation}
For integer $k\geq 0$:
\begin{equation}
 D_z^n (z-z_p)^k = 
\begin{cases}
 (z-z_p)^{k-n} \prod_{j=0}^{n-1} (k-j) & \text{if $k\geq n$} \\
 0 & \text{if $k<n$} \\
\end{cases}
\end{equation}
The $n$-th derivative of $(z-z_p)^m f(z)$ at $z=z_p$ becomes,
by differentiating the infinite power series term by term \cite{Knopp}, 
and using $0^0=1$ \cite{GKP94}, where $\delta_{k,n}$ is the Kronecker delta:
\begin{equation}
\begin{split}
 & \frac{1}{n!} D_z^n \sum_{k=0}^{\infty} a_{k-m} (z-z_p)^k |_{z=z_p} 
  = \frac{1}{n!} \sum_{k=0}^{\infty} a_{k-m} D_z^n (z-z_p)^k |_{z=z_p} \\
 & = \frac{1}{n!} \sum_{k=n}^{\infty} a_{k-m} (z-z_p)^{k-n} \prod_{j=0}^{n-1} (k-j) |_{z=z_p} 
  = \frac{1}{n!} \sum_{k=n}^{\infty} a_{k-m} 0^{k-n} \prod_{j=0}^{n-1} (k-j) \\
 & = \frac{1}{n!} \sum_{k=n}^{\infty} a_{k-m} \delta_{k,n} \prod_{j=0}^{n-1} (k-j) 
  = \frac{1}{n!} a_{n-m} \prod_{j=0}^{n-1} (n-j) = \frac{1}{n!} a_{n-m} n! = a_{n-m} \\
\end{split}
\end{equation}
Taking $n=m-1$ the theorem is proved.
\end{proof}
\begin{theorem}
When a complex function $f(z)$ does not have a pole at $z=z_p$, then:
\begin{equation}
 \res{z=z_p} \frac{f(z)}{(z-z_p)^m} = \frac{1}{(m-1)!} D_z^{m-1} f(z)|_{z=z_p}
\end{equation}
\end{theorem}
\begin{proof}
Because $f(z)$ does not have a pole at $z=z_p$,
the power series expansion of $f(z)$ at $z=z_p$ is the Taylor series expansion:
\begin{equation}
 f(z) = \sum_{k=0}^{\infty} a_k (z-z_p)^k
\end{equation}
and therefore:
\begin{equation}
 \frac{f(z)}{(z-z_p)^m} = \sum_{k=0}^{\infty} a_k (z-z_p)^{k-m}
 = \sum_{k=-m}^{\infty} a_{m+k} (z-z_p)^k
 = \sum_{k=-m}^{\infty} b_k (z-z_p)^k
\end{equation}
where $b_k=a_{m+k}$. The residue of this function at $z=z_p$ is $b_{-1}=a_{m-1}$,
and using the same method as in the previous theorem:
\begin{equation}
 \frac{1}{n!} D_z^n f(z)|_{z=z_p} = \frac{1}{n!} D_z^n \sum_{k=0}^{\infty} a_k (z-z_p)^k |_{z=z_p} = a_n
\end{equation}
Taking $n=m-1$ the theorem is proved.
\end{proof}
When in the last theorem $a_0\neq 0$, the last two theorems are equivalent.

\pdfbookmark[0]{References}{}


\begin{thebibliography}{99}
\bibitem{A79}
  L.V. Ahlfors,
  \textit{Complex Analysis},
  McGraw-Hill, 1979.
\bibitem{E84}
  G.P. Egorychev,
  \textit{Integral Representation and the Computation of Combinatorial Sums},
  Translations of Mathematical Monographs, 59, Amer. Math. Soc., 1984.
\bibitem{G72}
  H.W. Gould,
  \textit{Combinatorial Identities}, rev. ed.,
  Morgantown, 1972.
\bibitem{GS97}
  H.W. Gould, H.M. Srivastava,
  Some Combinatorial Identities Associated with the Vandermonde Convolution,
  \textit{Appl. Math. Comput.} 84~(1997)~97-102. 
\bibitem{GKP94}
  R.L. Graham, D.E. Knuth, O. Patashnik,
  \textit{Concrete Mathematics, A Foundation for Computer Science}, 2nd ed.,
  Addison-Wesley, 1994.
\bibitem{Knopp}
  K. Knopp,
  \textit{Theory and Application of Infinite Series},
  Dover Publications, 1990.
\bibitem{K08}
  S.G. Krantz,
  \textit{A Guide to Complex Variables},
  The Mathematical Association of America, 2008.
\bibitem{K18}
  M.J. Kronenburg,
  A Generalization of the Chu-Vandermonde Convolution and some Harmonic Number Identities,
  \href{https://arxiv.org/abs/1701.02768}{{\tt arXiv:1701.02768}}{\tt~[math.CO]}
\bibitem{K19}
  M.J. Kronenburg,
  Some Weighted Generalized Fibonacci Number Summation Identities, Part 1,
  \href{https://arxiv.org/abs/1903.01407}{{\tt arXiv:1903.01407}}{\tt~[math.NT]}
\bibitem{K15}
  M.J. Kronenburg,
  The Binomial Coefficient for Negative Arguments,
  \href{http://arxiv.org/abs/1105.3689}{{\tt arXiv:1105.3689}}{\tt~[math.CO]}
\bibitem{M18}
  R. Me\v{s}trovi\'{c},
  Several Generalizations and Variations of Chu-Vandermonde Identity,
  \href{https://arxiv.org/abs/1807.10604}{{\tt arXiv:1807.10604}}{\tt~[math.CO]}
\bibitem{RW00}
T. Rowland, E.W. Weisstein, \textit{Complex Residue}.
From Mathworld - A Wolfram Web Resource.
\href{https://mathworld.wolfram.com/ComplexResidue.html}
{{\tt https://mathworld.wolfram.com/ComplexResidue.html}}
\end{thebibliography}
\end{document}